\newcommand{\CC}{\mathbb{C}}
\newcommand{\RR}{\mathbb{R}}
\newcommand{\Paivarinta}{$\mathrm{P\ddot{a}iv\ddot{a}rinta}$}
\newtheorem{thm}{Theorem}[section]
\newtheorem{prop}[thm]{Proposition}
\newtheorem{lemma}[thm]{Lemma}
\newtheorem{coro}[thm]{Corollary}
\title{Inverse Problem of Electro-seismic Conversion}
\author{Jie Chen and Yang Yang}
\date{}
\begin{document}

\maketitle

\begin{abstract}
When a porous rock is saturated with an electrolyte, electrical fields are coupled with seismic waves via the electro-seismic conversion. Pride \cite{Pride1994} derived the governing models, in which Maxwell equations are coupled with Biot's equations through the electro-kinetic mobility parameter. The inverse problem of the linearized electro-seismic conversion consists in two step, namely the inversion of Biot's equations and the inversion of Maxwell equations. We analyze the reconstruction of conductivity and electro-kinetic mobility parameter in Maxwell equations with internal measurements, while the internal measurements are provided by the results of the inversion of Biot's equations. We show that knowledge of two internal data based on well-chosen boundary conditions uniquely determine these two parameters. Moreover, a Lipschitz type stability is proved based on the same sets of well-chosen boundary conditions.
\end{abstract}

\section{Introduction}

When a porous rock is saturated with an electrolyte, an electric double layer is formed at the interface of the solid and the fluid. One side of the interface is negatively charged and the other side is positively charged. Such electric double layer(EDL) system is also called Debye layer. Due to the EDL system, electromagnetic(EM) fields and mechanical waves are coupled through the phenomenon of electro-kinetics. Precisely, electrical fields or EM waves acting on the EDL will move the charges, creating relative movement of fluid and solid. This is called electro-seismic conversion. Conversely, mechanical waves moving fluid and solid will generate EM fields. This is called seismo-electric conversion. Thompson and Gist \cite{Thom1993} have made field measurement clearly demonstrating seismo-electric conversion in saturated sediments. Zhu et al. \cite{Zhu1999, Zhu2003, Zhu2005} made laboratory experiments and observed
the seismo-electric conversion in model wells, and their experimental results confirm that seismo-electric logging could be a new bore-hole logging technique. 

The investigation of wave propagation in fluid-saturated porous media was early developed by Biot \cite{Biot1956, Biot1956_2}. The governing equations of the electro-seismic converstion was derived by Pride \cite{Pride1994} as following.
\begin{eqnarray}
	&\nabla\times E = i\omega\mu H,&
			\label{Maxwell1}\\
	&\nabla\times H = (\sigma - i\epsilon\omega)E 
			+ L(-\nabla p +\omega^2\rho_f u) + J_s,&
			\label{Maxwell2}\\
	&-\omega^2(\rho u + \rho_f w) = \nabla\cdot \tau,&
			\label{Biot1}\\
	&-i\omega w = LE + \frac{\kappa}{\eta}(-\nabla p + \omega^2\rho_f u),&
			\label{Biot2}\\
	&\tau = (\lambda\nabla\cdot u + c\nabla\cdot w)I 
			+ G(\nabla u + \nabla u^T),&
			\label{Biot3}\\
	&-p = c\nabla\cdot u + M\nabla\cdot w,&
			\label{Biot4}
\end{eqnarray}
where the first two are Maxwell's equations, the remaining are Biot's equations. The notation is as follows:
\begin{itemize}
	\item[$E$]        electric field,
	\item[$H$]        magnetizing field or magnetic field intensity,
	\item[$\omega$]   seismic wave frequency,
	\item[$\sigma$]   conductivity,
	\item[$\epsilon$] dielectric constant or relative permittivity,
	\item[$\mu$]      magnetic permeability,
	\item[$J_s$]      source current,
	\item[$p$]        pore pressure,
	\item[$\rho_f$]   density of pore fluid,
	\item[$L$]        electro-kinetic mobility parameter,
	\item[$\kappa$]   fluid flow permeability,
	\item[$u$]        solid displacement,
	\item[$w$]        fluid displacement,
	\item[$\tau$]     bulk stress tensor,
	\item[$\eta$]     viscosity of pore fluid,
	\item[$\lambda, G$] Lam$\acute{\mathrm{e}}$ parameters of elasticity,
	\item[$C, M$]     Biot moduli parameters.
\end{itemize}
Pride and Haartsen \cite{Pride1996} also analyzed the basic properties of seismo-electric waves.

Notice that the coupling is non-linear, namely electro-seismic and seismo-electric conversions happen simultaneously. Under the assumptions that the coupling is so weak that multiple coupling is neglectable, we can linearize the forward system in two steps. Particularly, we focus on the electro-seismic conversion and ignore the seismo-electric conversion. The first step in the forward system is modeled by Maxwell equations without the effect of the seismic waves, i.e., $L=0$ in \eqref{Maxwell2}. While the electro-seismic conversion happens, the seismic waves are generated and modeled by Biot's equations with potential $LE$ in \eqref{Biot2}. 

In the present paper, we mainly focus on the inverse problem of the linearized electro-seismic conversion, which is a hybrid problem and consists of two steps. The first step of the inverse problem is to invert Biot's equations, i.e., to recover the potential $LE$ in \eqref{Biot2} from any measurements observed on the domain boundary. Williams \cite{Williams2001} presented an approximation to Biot's equations, which could be a useful tool to study the inverse problem.

Assuming the first step is implemented successfully, the second step of the inverse problem is to invert Maxwell's equations, which consists of reconstructing the conductivity $\sigma$ and the electro-kinetic mobility parameter or the coupling coefficient $L$ from boundary measurements of the electrical fields and the internal data $LE$ obtained in the first step.

The problem of interest in this paper is the second step of the inverse problem. We study the reconstruction of the conductivity $\sigma$ and the coupling coefficient $L$ and prove uniqueness and stability results of the reconstructions. Particularly, we show that $\sigma, L$ are uniquely determined by $2$ well-chosen electrical fields at the domain boundary. The explicit reconstruction procedure is presented. The stability of the reconstruction is established from either $2$ measurements under geometrical conditions or from $6$ well-chosen boundary conditions.

Mathematically, our proof relies on explicit solutions to Maxwell's equations, namely Complex Geometrical Optics (CGO) solutions, constructed by Colton and \Paivarinta \cite{Colton1992}. In our reconstruction procedure, the coupling coefficient $L$ satisfies a transport equation with vector field $\beta$. With CGO solutions, we can prove the integral curves of the vector field $\beta$ are close to straight lines and exit the domain in finite time. Therefore, $L$ can be uniquely and explicitly solved by the characteristic method. Stability follows the analysis of the method of characteristic.

The rest of the paper is structured as follows. Section \ref{se:Main} presents our main results. The CGO solutions are introduced in section \ref{se:CGO}. The inverse Maxwell's equations and an explicit reconstruction algorithm are addressed in the rest of section \ref{se:Maxwell}, while section \ref{se:unique} focusing on the proof of the uniqueness result and section \ref{se:stab} and \ref{se:stab2} focusing on the stability proof.

\section{Main results}\label{se:Main}

Let $\Omega$ be an open, bounded and connected domain in $\RR^3$ with $C^2$ boundary $\partial\Omega$. In the second step of the electro-seismic conversion, the propagation of the electrical fields is modeled by Maxwell's equations in $\Omega$, 
\begin{equation}\left\{
\begin{array}{rcl}\label{Maxwell}
	\nabla\times E &=& i\omega\mu H,\\
	\nabla\times H &=& (\sigma - i\epsilon\omega)E + J_s.
\end{array}
\right.\end{equation}
The measurements available for the inverse problem include the internal data from the first step
\begin{equation}
	D:=LE,\;\mathrm{in}\;\Omega
\end{equation}
and the boundary illumination, i.e., the tangential boundary measurement of the electrical field
\begin{equation}
	G:=tE,\;  \mathrm{on}\;\partial\Omega.
\end{equation}
Define the operator
\begin{equation} \label{Max_Meas_map}
	\Lambda_M (L,\sigma) := (J_s, D, G).
\end{equation}
The problem now is to invert the operator $\Lambda_M$, or namely, to reconstruct $(L,\sigma)$ from some measurements $(J_{s,j}, D_j, G_j)$ indexed by $j$, assuming $\mu$ and $\epsilon$ are given.

The main purpose of this paper is to prove the uniqueness and stability of the coefficient reconstructions. We define the set of coefficients $(L, \sigma)\in\mathcal{M}$ as
\begin{align}
\label{eq:para_space}
\mathcal{M} = \{ (L,\sigma)\in & \;C^d\times C^{d-2}: d\geq 2 \\ &\mathrm{and}\; 0 \mathrm{\;is\;not\;an\;eigenvalue\;of\;} \nabla\times\nabla\times\cdot-k^2n\},\notag
\end{align}
where the wave number $k>0$ and the refractive index $n$ are given by
\begin{equation}
\label{Maxwellkn}
	k = \omega\sqrt{\epsilon_0\mu_0},\quad
	n = \frac{1}{\epsilon_0}\left(\epsilon + i\frac{\sigma}{\omega}\right).
\end{equation}

The main results are as follows, where the measurements $G$ and $D$ are complex-valued.

\begin{thm}\label{thm:unique}
Let $\Omega$ be an open, bounded subset of $\RR^3$ with boundary $\partial\Omega$ of class $C^{d}$. Let $(L,\sigma)$ and $(\tilde{L}, \tilde{\sigma})$ be two elements in $\mathcal{M}$. Let $D:=(D_1,D_2)$ and $\tilde{D}:=(\tilde{D}_1,\tilde{D}_2)$, be two sets of internal data on $\Omega$ for the coefficients $(L,\sigma)$, $(\tilde{L},\tilde{\sigma})$, respectively and with boundary illuminations $G:=(G_1,G_2)$. 

Then there is a subset of $G \in (C^{d+3}(\partial\Omega))^2$, such that if $D_j=\tilde{D}_j$, $j=1,2$, we have $(L,\sigma) = (\tilde{L},\tilde{\sigma})$.
\end{thm}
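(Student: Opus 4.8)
The plan is to reconstruct the pair $(L,\sigma)$ from the two internal data $D_j = L E_j$ by exploiting the structure of Maxwell's equations together with the Complex Geometrical Optics (CGO) solutions of Colton and \Paivarinta. First I would note that the conductivity $\sigma$ enters Maxwell's system only through the refractive index $n = \frac{1}{\epsilon_0}(\epsilon + i\sigma/\omega)$, so that recovering $\sigma$ is equivalent to recovering $n$, and that eliminating $H$ from \eqref{Maxwell} yields the second-order equation $\nabla\times\nabla\times E - k^2 n E = i\omega\mu J_s$. Since the internal datum is $D_j = L E_j$ and $L$ is scalar, the ratio $D_1/D_2 = E_1/E_2$ (componentwise, where defined) is independent of the unknown $L$; this is the key decoupling observation. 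My strategy would therefore be: (i) use the measured boundary illuminations $G_j = t E_j$ and the CGO machinery to pin down $n$ (hence $\sigma$) from the internal data alone, and (ii) having determined $\sigma$ and thus the electric fields $E_j$ as solutions of a now-known equation, recover $L$ pointwise via $L = D_j / E_j$ wherever $E_j$ is nonvanishing, patching the two measurements so that at every point of $\Omega$ at least one $E_j$ is bounded away from zero.

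The mechanism for step (ii), and the conceptual heart of the argument, is the transport equation advertised in the introduction: I would take a suitable combination or derivative of the two relations $D_j = L E_j$ to eliminate the unknown field and obtain a first-order PDE for $\log L$ (or for $L$ itself) of the form $\beta\cdot\nabla \log L = F$, where the vector field $\beta$ and the right-hand side $F$ are built from known quantities — the internal data $D_j$, their derivatives, and the Maxwell operator applied to them. Concretely, differentiating $D_j = L E_j$ gives $\nabla D_j = (\nabla \log L) D_j + L\nabla E_j$, and combining the two equations $j=1,2$ to cancel the $\nabla E_j$ terms (using the equation $E_j$ satisfies) should isolate a transport equation for $\log L$ with an explicitly computable advection field $\beta$. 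This is precisely the setup for the method of characteristics.

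To make the method of characteristics applicable I would invoke the CGO solutions: by choosing the two boundary illuminations $G_1, G_2$ so that the corresponding CGO solutions $E_1, E_2$ have the leading exponential behavior $e^{\rho\cdot x}$ with $\rho$ a large complex frequency satisfying $\rho\cdot\rho = k^2$, the vector field $\beta$ is shown to be a small perturbation of a constant field $\mathrm{Re}\,\rho$ (or its imaginary counterpart). The estimates from the CGO construction then guarantee that the integral curves of $\beta$ are $C^1$-close to straight lines and, since $\Omega$ is bounded, each curve exits $\partial\Omega$ in finite time. Along each such curve $\log L$ is recovered by integrating $F$ from a boundary point where $L$ is known (or determined up to the boundary value, which is fixed by the boundary illumination data), giving a unique and global reconstruction of $L$ throughout $\Omega$. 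Uniqueness of $\sigma$ then follows because two parameter pairs producing identical internal data $D_j$ must produce the same $n$, which is forced by the injectivity properties of the CGO-based reconstruction of the refractive index.

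I expect the main obstacle to be twofold. First, controlling the zeros of the CGO fields $E_j$: the reconstruction $L = D_j/E_j$ and the transport equation both degenerate where $E_j$ vanishes, so the crux is to show — using the explicit leading-order $e^{\rho\cdot x}$ behavior and the smallness of the remainder in the CGO asymptotics — that the two chosen illuminations can be arranged so their fields never simultaneously vanish and that $\beta$ is nowhere degenerate. Second, verifying that the advection field $\beta$, assembled from the internal data and the Maxwell operator, genuinely has integral curves that traverse $\Omega$ without trapping; this is where the "close to straight lines, exits in finite time" claim must be made quantitative, and it is the step that dictates how large $|\rho|$ must be chosen and hence which subset of boundary conditions $G$ the theorem is allowed to select.
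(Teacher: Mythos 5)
Your elaborated argument (paragraphs two through four) is essentially the paper's proof: eliminate the unknown field by exploiting the symmetry of the curl-curl relation between the two solutions, obtaining a transport equation for $L$ whose advection field $\beta$ and zeroth-order coefficient are built solely from the data $D_1,D_2$ (this is \eqref{eq:vector_field} with \eqref{eq:beta}--\eqref{eq:gamma} in the paper); use the CGO asymptotics to show $\beta$ has nearly constant direction (Proposition \ref{thm:betafield}), so its characteristics cross $\Omega$ and exit in finite time; integrate along characteristics from the boundary, where $L$ is fixed by $L = D/G$, to conclude $L=\tilde L$ as in \eqref{eq:beta8}.

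However, your stated plan in the first paragraph is backwards and is inconsistent with that elaboration, and as written it contains a step that would fail. Step (i) proposes to ``pin down $n$ (hence $\sigma$) from the internal data alone'' \emph{before} $L$ is known, with $L$ recovered afterwards by division; but you give no mechanism for recovering $n$ first, and the decoupling observation $D_1/D_2 = E_1/E_2$ does not by itself provide one. The workable order --- the one the paper uses, and the one your own paragraphs two through four actually follow --- is the reverse: recover $L$ first from the transport equation (whose coefficients involve no unknown parameters), then set $E_j = D_j/L$ and read $n$ off \emph{pointwise} from $\nabla\times\nabla\times E_j = k^2 n E_j$, i.e.\ \eqref{Maxwell_E_j}, wherever $E_j \neq 0$; nonvanishing of $E_j$ is arranged by choosing $G_j$ in a small $C^{d+3}$ neighborhood of the tangential traces of the nonvanishing CGO solutions. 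In particular, your closing appeal to ``injectivity properties of the CGO-based reconstruction of the refractive index'' is both unnecessary and unsubstantiated: no inverse-scattering-type uniqueness theorem is invoked or needed, since once $L=\tilde L$ (hence $E_j=\tilde E_j$) the identification $\sigma=\tilde\sigma$ is pure pointwise algebra. With the ordering corrected and that final step replaced by the pointwise identity, your proof coincides with the paper's.
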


To consider the stability of the reconstruction, we need to restrict to a subset of $\Omega$. Let $\zeta_0$ be a constant unit vector. Let $x_0\in\partial\Omega$ be the tangent point of $\partial\Omega$ with respect to $\zeta_0$, i.e., the tangent line of $\partial\Omega$ at $x_0$ is parallel to $\zeta_0$. Define $\Omega_1$ to be the subset of $\Omega$ by removing a neighborhood of each tangent point $x_0\in\partial\Omega$.


\begin{thm}\label{thm:stab}
let $d\geq 3 $. Let $\Omega$ be convex with $C^d$ boundary $\partial\Omega$ and $\Omega_1$ is defined as above. Assume that $(L, \sigma)$ and $(\tilde{L},\tilde{\sigma})$ are two elements in $\mathcal{M}$. Let $D = (D_j)$ and $\tilde{D} = (\tilde{D}_j)$, $j=1,2$, be the internal data for coefficients $(L, \sigma)$ and $(\tilde{L},\tilde{\sigma})$, respectively, with boundary conditions $G=(G_j)$, $j=1,2$. 

Then there is a set of illuminations $G\in (C^{d+3}(\partial\Omega))^2$ such that restricting to $\Omega_1$, we have
\begin{equation}
\label{eq:stable}
	\|L-\tilde{L}\|_{C^{d-1}(\Omega_1)} + \|\sigma-\tilde{\sigma}\|_{C^{d-3}(\Omega_1)} \leq C\|D-\tilde{D}\|_{(C^{d}(\Omega_1))^2}.
\end{equation}
\end{thm}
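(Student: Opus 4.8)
The plan is to establish the stability estimate by quantifying the characteristic-method reconstruction that underlies the uniqueness proof of Theorem~\ref{thm:unique}. First I would recall that the coupling coefficient $L$ satisfies a transport equation $\beta\cdot\nabla L = f$ with a vector field $\beta$ built from the CGO solutions of Maxwell's equations, and that the internal data $D=LE$ together with the known field $E$ determine $L$ along the integral curves of $\beta$. The starting observation is that when $D_j=\tilde D_j$ the uniqueness argument gives $L=\tilde L$ exactly; here, with $D_j\neq\tilde D_j$, I would subtract the two transport equations to obtain an equation of the form $\beta\cdot\nabla(L-\tilde L)=R$, where the right-hand side $R$ and the discrepancy in the vector fields $\beta-\tilde\beta$ are controlled linearly by $\|D-\tilde D\|$ and its derivatives. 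The key structural input is that, by the CGO construction, the integral curves of $\beta$ are $C^1$-close to straight lines and exit $\Omega$ in finite (uniformly bounded) time; the restriction to $\Omega_1$ removes the tangent points $x_0$ where the exit time degenerates, so that on $\Omega_1$ the flow map is uniformly transverse and the travel time is bounded.

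Next I would carry out the characteristic integration quantitatively. Integrating the difference equation along the (nearly straight) integral curves from a boundary point, and using Gr\"onwall's inequality to absorb the contribution of the perturbed vector field, yields a pointwise bound $|L-\tilde L|\leq C\|D-\tilde D\|$ on $\Omega_1$. To obtain the stated $C^{d-1}$ norm rather than merely the sup norm, I would differentiate the transport equation up to order $d-1$, noting that the coefficients $(L,\sigma)\in C^d\times C^{d-2}$ and $\beta$ inherit enough regularity from the CGO solutions (which are smooth in the interior), and propagate the higher-order estimates along characteristics in the same way. The convexity of $\Omega$ and the $C^d$ regularity of $\partial\Omega$ guarantee that each point of $\Omega_1$ is reached by a unique characteristic originating at the boundary with controlled geometry, so the flow map and its derivatives are uniformly bounded.

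Having controlled $L-\tilde L$, I would recover the conductivity estimate. Since $\sigma$ enters through the refractive index $n$ in \eqref{Maxwellkn} and is linked to $L$ and the fields via the Maxwell system and the internal data, differentiating the relation defining the reconstruction of $\sigma$ expresses $\sigma-\tilde\sigma$ in terms of $L-\tilde L$, $D-\tilde D$, and two further derivatives of the data. This accounts for the loss of two derivatives in the stated estimate, giving $\|\sigma-\tilde\sigma\|_{C^{d-3}(\Omega_1)}\leq C\bigl(\|L-\tilde L\|_{C^{d-1}(\Omega_1)}+\|D-\tilde D\|_{(C^{d}(\Omega_1))^2}\bigr)$, which combines with the bound on $L-\tilde L$ to yield \eqref{eq:stable}.

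The main obstacle I expect is the quantitative control of the perturbed vector field and its integral curves: I must show that $\beta-\tilde\beta$ is bounded by $\|D-\tilde D\|$ in a sufficiently strong norm, uniformly on $\Omega_1$, and that the two families of characteristics stay close enough over their entire (bounded) travel time for the Gr\"onwall argument to close without blow-up near $\partial\Omega_1$. This is precisely where the geometric hypotheses—convexity of $\Omega$ and the excision of tangent-point neighborhoods defining $\Omega_1$—are essential, since they are what keep the exit time bounded and the flow nondegenerate; handling the higher-order derivative estimates uniformly up to the boundary of $\Omega_1$ will be the most delicate bookkeeping.
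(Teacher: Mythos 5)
Your proposal is correct in substance and stays within the paper's overall framework (CGO-induced transport equation for $L$, method of characteristics, transversality furnished by $\Omega_1$, then recovery of $\sigma$ from \eqref{Maxwell_E1} with a loss of two derivatives), but the core estimate is organized along a genuinely different decomposition. The paper never linearizes the transport equation: it represents $L$ and $\tilde L$ separately by the characteristic formula \eqref{eq:beta8}, along the flows of $\beta$ and of $\tilde\beta$ respectively, and then compares the two flows. That route requires the Gronwall estimates \eqref{eq:beta10}--\eqref{eq:beta12} for $\theta_x-\tilde\theta_x$ and its derivatives and, as its key geometric ingredient, Lemma~\ref{thm:stab1} (proved via the angle bound \eqref{eq:beta14} and the sine theorem, using convexity and the excision of tangent points), which gives \emph{stability} of the exit data, $\|x_+-\tilde x_+\|_{C^{d-1}(\Omega_1)}+\|t_+-\tilde t_+\|_{C^{d-1}(\Omega_1)}\leq C\|\beta-\tilde\beta\|_{C^{d-1}(\Omega_1)}$; these feed into Proposition~\ref{thm:stab2}, from which the theorem follows. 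Your subtraction trick --- setting $w=L-\tilde L$, so that $\beta\cdot\nabla w+\gamma w=R$ with $R=-(\beta-\tilde\beta)\cdot\nabla\tilde L-(\gamma-\tilde\gamma)\tilde L$, and integrating along the characteristics of $\beta$ alone --- bypasses the two-flow comparison and hence Lemma~\ref{thm:stab1} entirely: you only need \emph{boundedness} of $x_+$, $t_+$, $\theta_x$ and their derivatives for a single vector field, not their stability under perturbation of the field. The price is an a priori bound on $\|\tilde L\|_{C^{d}}$, which is available since $(\tilde L,\tilde\sigma)\in\mathcal{M}$; both routes end with constants depending on the unknown coefficients, so nothing is lost, and your argument is arguably the more economical of the two.

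Three points need to be made explicit to close your argument. First, the role of $\Omega_1$ is not that the exit time degenerates at tangent points (it remains uniformly bounded by \eqref{eq:beta5}); it is that the \emph{derivatives} of $x_+(\cdot)$ and $t_+(\cdot)$ blow up where characteristics meet $\partial\Omega$ tangentially, and your higher-order estimates differentiate exactly these maps --- this is where convexity and the excision enter. Second, when you differentiate the transport equation and propagate along characteristics, the boundary values of $\nabla w,\dots,D_x^{d-1}w$ at exit points are not pure data: only tangential derivatives of $w_0=L_0-\tilde L_0$ are known (with $L_0=tD/G$ and the same $G$ for both coefficient pairs, so $w_0$ is controlled by the data difference); the normal derivatives must be recovered from the equation itself using transversality of $\beta$ at the exit points, or else one differentiates the closed-form characteristic solution for $w$ directly, as the paper does for $L$. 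Third, for the $\sigma$ step you must justify dividing by $E_j$: by the choice of illuminations near CGO traces, \eqref{eq:regularity3} shows $E_j$ stays close to the non-vanishing CGO solutions, so $k^2n$ can be solved pointwise from \eqref{Maxwell_E1}; this is precisely where the well-chosen boundary conditions are used and where the two extra derivatives are lost, matching the $C^{d-3}$ norm in \eqref{eq:stable}.
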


The geometric conditions can be removed when more measurements are available. In particular, when 6 complex measurements are provided, we have the following stability result.
\begin{thm}\label{thm:stab2n}
let $d\geq 3$. Let $\Omega$ be convex and $\Omega_1$ is defined as above. Assume that $(L, \sigma)$ and $(\tilde{L},\tilde{\sigma})$ are two elements in $\mathcal{M}$. Let $D = (D_1^j, D_2^j)$ and $\tilde{D} = (\tilde{D}_1^j,\tilde{D}_2^j)$, $j=1,2,3$, be the internal data for coefficients $(L, \sigma)$ and $(\tilde{L},\tilde{\sigma})$, respectively, with boundary conditions $G=(G_1^j, G_2^j)$, $j=1,2,3$. 

Then there is a set of illuminations $G\in (C^{d+3}(\partial\Omega))^6$, such that
\begin{equation}
\label{eq:stable9}
	\|L-\tilde{L}\|_{C^{d-1}(\Omega)} + \|\sigma-\tilde{\sigma}\|_{C^{d-3}(\Omega)} \leq C\|D-\tilde{D}\|_{(C^{d}(\Omega))^6}.
\end{equation}
\end{thm}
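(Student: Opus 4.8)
The plan is to reduce Theorem \ref{thm:stab2n} to the geometry-free version of the characteristic analysis that underlies Theorem \ref{thm:stab}, the essential new ingredient being that with six well-chosen measurements one can construct, at every interior point $x \in \Omega$, a transport vector field $\beta(x)$ whose integral curve through $x$ reaches $\partial\Omega$ while staying inside $\Omega$, \emph{uniformly} in $x$. First I would recall the reconstruction identity derived earlier in the paper: from the pair of Maxwell solutions $(E_j, H_j)$ associated with a boundary illumination $G_j$, the coupling coefficient $L$ satisfies a first-order transport equation of the form $\beta \cdot \nabla (\log L) = F$, where $\beta$ and $F$ are built algebraically from the internal data $D_j = L E_j$ and their derivatives, together with the known CGO structure. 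The quantity $\beta$ depends on the chosen illuminations but not on the unknown coefficients beyond what is encoded in the data, so the difference $L - \tilde L$ obeys a perturbed transport equation whose source term is controlled by $\|D - \tilde D\|_{C^d}$.

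Next I would exploit the extra freedom of having three groups of two measurements, i.e. six complex boundary conditions. In Theorem \ref{thm:stab} the restriction to $\Omega_1$ was forced because near a tangent point $x_0$ the single CGO direction $\zeta_0$ produces a field $\beta$ whose integral curves graze the boundary and may fail to exit in uniformly bounded time. The geometric fix is to pick three linearly independent unit vectors $\zeta_0^{(1)}, \zeta_0^{(2)}, \zeta_0^{(3)}$ and the corresponding CGO illuminations, so that at every point of $\Omega$ at least one of the three resulting vector fields $\beta^{(j)}$ is transverse to $\partial\Omega$ along its characteristic and exits the convex domain in finite time with a uniform lower bound on the transversality. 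Convexity is what guarantees that for each fixed direction the bad set (the neighborhoods of tangent points) is a genuine shrinking sliver, and that three generic directions have no common tangent point, so the union of the three good sets covers all of $\Omega$. I would make this quantitative by showing, via the CGO estimates of Section \ref{se:CGO}, that each $\beta^{(j)}$ is a small perturbation of the constant direction $\zeta_0^{(j)}$, so the three associated characteristic foliations together admit a uniform escape-time and transversality bound on the whole of $\Omega$, not merely on $\Omega_1$.

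With this covering in hand, I would run the method-of-characteristics stability estimate locally on each of the three good regions. On the region where $\beta^{(j)}$ is transverse, integrating the perturbed transport equation along characteristics and applying a Gr\"onwall argument gives
\begin{equation}
\|L - \tilde L\|_{C^{d-1}} \le C\,\|D - \tilde D\|_{(C^{d})^6}
\end{equation}
on that region, with the constant $C$ controlled by the uniform escape time and the $C^d$ bounds on $\beta^{(j)}$. Patching the three estimates over the covering yields the bound on all of $\Omega$ rather than $\Omega_1$. Once $L$ is controlled in $C^{d-1}(\Omega)$, the conductivity difference $\sigma - \tilde\sigma$ is recovered algebraically: since $D_j = L E_j$ and $E_j$ solves Maxwell's equations with coefficient $\sigma$, taking curls of the data recovers $n$ (hence $\sigma$ through \eqref{Maxwellkn}) by differentiating $D_j/L$ twice, which costs two derivatives and produces the estimate on $\sigma - \tilde\sigma$ in $C^{d-3}(\Omega)$. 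Combining the two pieces gives \eqref{eq:stable9}.

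The main obstacle I anticipate is the uniform covering argument: one must verify that three directions can be chosen so that the shrinking tangent-point neighborhoods for the three directions have empty common intersection and that the escape-time bound is genuinely uniform up to $\partial\Omega$, including points arbitrarily close to the boundary where a single field would degenerate. This is where convexity and the explicit smallness of the CGO perturbation must be used together, and it is delicate because the transversality of $\beta^{(j)}$ degenerates precisely at the tangent points, so one needs that no point of $\bar\Omega$ is simultaneously a (near-)tangent point for all three chosen directions, with quantitative margins that survive the perturbation from straight lines to the true characteristics.
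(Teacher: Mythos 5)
Your proposal shares the paper's starting point --- three pairs of CGO illuminations whose limiting directions $\zeta_0^j$, $j=1,2,3$, are linearly independent, each pair producing a transport equation of the form \eqref{eq:vector_field} with vector field $\beta^j$ close to $L^2\zeta_0^j$ as in \eqref{eq:stable5} --- but from that point on you take a genuinely different route, and the difference is substantive. The paper does \emph{not} run the method of characteristics for this theorem at all. Instead, since the three vectors $\beta^j(x)$ are linearly independent at every $x\in\Omega$ (by \eqref{eq:stable5}, the linear independence of the $\zeta_0^j$, and $L\neq 0$), it inverts the $3\times 3$ matrix whose rows are the $\beta^j$ and converts the three scalar transport equations into the single pointwise gradient equation $\nabla L + \Gamma(x)L=0$ of \eqref{eq:stable6}, with $\Gamma$ stable under perturbations of the data as in \eqref{eq:stable7}. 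Once $\nabla L/L$ is known pointwise, one simply integrates along an \emph{arbitrary} smooth curve joining $x$ to $\partial\Omega$; no transversality, escape-time, or tangent-point considerations enter anywhere, and essentially only connectedness of $\Omega$ is used. This algebraic use of the three directions is precisely what removes the restriction to $\Omega_1$, and it yields \eqref{eq:stable8} in two lines. You, by contrast, use the three directions geometrically: you keep the characteristic machinery of Lemma \ref{thm:stab1} and Proposition \ref{thm:stab2} and patch it over a covering of $\Omega$ by three good regions, one per direction. Your route can in principle be completed: the key fact is that the outward normal $\nu(z)$ cannot be orthogonal to three linearly independent directions, so the three grazing (near-tangent) sets have empty triple intersection, and a compactness argument --- using that a chord of a convex domain exiting at grazing angle $\delta$ lies within distance $C\delta$ of the tangent plane at its exit, hence within $C\delta$ of $\partial\Omega$ --- makes the covering quantitative and stable under the $O(h)$ perturbation in \eqref{eq:beta5}. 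What each approach buys: yours recycles the two-measurement stability analysis verbatim on each patch, while the paper's trades all of that geometry for one matrix inversion and consequently needs weaker hypotheses (connectedness rather than a transversality structure) and gives cleaner constants.

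The one genuine deficiency in your write-up is that the covering lemma is not proved; you explicitly defer it as the ``main obstacle,'' and it is exactly the step that the paper's matrix-form reduction makes unnecessary. Two smaller points: the empty triple intersection of the tangent sets follows from linear independence alone (a normal orthogonal to a basis of $\RR^3$ would vanish), not from convexity or genericity of the directions, so that part is easier than you suggest; and when patching you should also record, as the paper does in the proof of Theorem \ref{thm:stab}, that the $E_j=D_j/L$ are non-vanishing because the illuminations are chosen near the CGO traces --- this is what licenses the algebraic recovery of $k^2n$ from \eqref{Maxwell_E1} and the two-derivative loss giving the $C^{d-3}$ bound on $\sigma-\tilde{\sigma}$, which you otherwise handle exactly as the paper does.
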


Note that the above measurements are all complex-valued. We will need two real measurements to make up one complex data.

\section{Inversion of Maxwell's equations with internal data}\label{se:Maxwell}
Let $\Omega$ be an open, bounded and connected domain in $\RR^3$ with $C^2$ boundary $\partial\Omega$. In the case when $\mu \equiv \mu_0$ is constant and $J_s = 0$ in $\Omega$, we can rewrite the system in \eqref{Maxwell} as
\begin{equation} \label{Maxwell_E1}
  \nabla\times\nabla\times E - k^2n E =0,
\end{equation}
and
\begin{equation} \label{Maxwell_E2}
	\nabla\cdot nE = 0
\end{equation}
where the wave number $k>0$ and the refractive index $n$ are given by \eqref{Maxwellkn}.

\subsection{Complex Geometrical Optics solutions}\label{se:CGO}
Colton and \Paivarinta \cite{Colton1992} constructed explicit solutions, namely \emph{Complex Geometrical Optics} solutions(CGOs), to the Maxwell's equation \eqref{Maxwell_E1} and \eqref{Maxwell_E2}. CGOs will be the main technique we will use to solve the inverse Maxwell problem. We follow the construction of CGOs in \cite{Colton1992} and extend their properties from $L^2$ space to higher order sobolev spaces. CGOs are of the form
\begin{equation}
	\label{CGOs}
	E(x) = e^{i\zeta\cdot x}(\eta + R_\zeta(x)),
\end{equation}
where $\zeta\in\CC^3\backslash\RR^3$, $\eta\in\CC^3$, are constant vectors satisfying
\begin{equation}
	\label{CGO_para_con}
	\zeta\cdot\zeta = k^2, \;
	\zeta\cdot\eta = 0.
\end{equation}
Substituting \eqref{CGOs} into \eqref{Maxwell_E1} and \eqref{Maxwell_E2} gives
\begin{eqnarray}
\label{Maxwell3} \tilde{\nabla}\times\tilde{\nabla}\times R_\zeta &=& k^2(n-1)\eta + k^2 n R_\zeta,\\
\label{Maxwell4} \tilde{\nabla}\cdot R_\zeta &=& -\alpha\cdot (\eta + R_\zeta)
\end{eqnarray}
where $\tilde{\nabla} := \nabla + i\zeta$ and $\alpha := \nabla n(x)/n(x)$. We further define $\tilde{\Delta} := \Delta + 2i\zeta\cdot \nabla -k^2$. By substituting the formula $\tilde{\nabla}\times\tilde{\nabla} \times R_\zeta = -\tilde{\Delta} R_\zeta +\tilde{\nabla}\tilde{\nabla}\cdot R_\zeta$ into \eqref{Maxwell3} and \eqref{Maxwell4}, we see that $R_\zeta$ is a solution to
\begin{equation}
\label{Maxwell5}
(\Delta + 2i\zeta\cdot\nabla) R_\zeta = -\tilde{\nabla}(\alpha\cdot (\eta + R_\zeta)) + k^2(1-n)(\eta + R_\zeta).
\end{equation}

It was proved in \cite{Colton1992} the existence of $R_\zeta$ to \eqref{Maxwell5} as a $C^2(\RR^3)$ functions. For our analysis, we need to extend the results of CGOs in \cite{Colton1992} to smoother function spaces.

Let the space $L^2_\delta$ for $\delta\in\RR$ be the completion of $C^\infty_0(\RR^3)$ with respect to the norm $\|\cdot\|_{L^2_\delta}$ defined as
\begin{equation}
\|u\|_{L^2_\delta} = \left(\int_{\RR^3}\langle x\rangle^{2\delta} |u|^2 dx\right)^{\frac{1}{2}},\qquad \langle x\rangle = (1 + |x|^2)^{\frac{1}{2}}.
\end{equation}
To get smoother CGOs than that constructed in \cite{Colton1992}, we introduce the space $H^s_\delta$ for $s>0$ as the completion of $C^\infty_0(\RR^3)$ with respect to the norm $\|\cdot\|_{H^s_\delta}$ defined as
\begin{equation}
\|u\|_{H^s_\delta} = \left(\int_{\RR^3}\langle x\rangle^{2\delta} |(I-\Delta)^{\frac{s}{2}}u|^2 dx\right)^{\frac{1}{2}}
\end{equation}
Here $(I-\Delta)^{\frac{s}{2}}u$ is defined as the inverse Fourier transform of $\langle \xi\rangle^s\hat{u}(\xi)$, where $\hat{u}(\xi)$ is the Fourier transform of $u(x)$. 

We recall \cite{Uhlmann1987} for $|\zeta|\geq c>0$ and $v\in L^2_{\delta+1}$ with $-1<\delta<0$, the equation
\begin{equation}
	(\Delta + 2i\zeta\cdot \nabla)u = v
\end{equation}
admits a unique week solution $u\in L^2_\delta$ with
\begin{equation}
	\|u\|_{L^2_\delta}\leq C(\delta, c)\frac{\|v\|_{L^2_{\delta+1}}}{|\zeta|}.
\end{equation}
Since $(\Delta + 2i\zeta\cdot \nabla)$ and $(I-\Delta)^s$ are constant coefficient operators and hence commute, we deduce that when $v\in H^s_{\delta+1}$, for $s>0$, then
\begin{equation}
	\|u\|_{H^s_\delta}\leq C(\delta, c)\frac{\|v\|_{H^s_{\delta+1}}}{|\zeta|}.
\end{equation}
We define the integral operator $G_\zeta:H^s_{\delta+1}(\RR^3) \rightarrow H^s_\delta(\RR^3)$ by
\begin{equation}
	G_\zeta(v) := F^{-1}\left(\frac{\hat{v}}{\xi^2 + 2\zeta\cdot\xi}\right),
\end{equation}
where $F^{-1}$ is the inverse Fourier transform. We see that $G_\zeta$ is bounded and there exists a positive constant $C(\delta)$ such that
\begin{equation}
\label{G_bd}
\|G_\zeta\|\leq\frac{C}{|\zeta|}.
\end{equation}

Before we can prove the existence of a unique solution to \eqref{Maxwell5}, we first prove the following lemma.
\begin{lemma} \label{thm:lemma1}
For any $v\in H^s_{\delta+1}(\RR^3)$ and $|\zeta|$ sufficiently large, the equation
\begin{equation}
\label{Maxwell6}
	(\Delta + 2i\zeta\cdot\nabla + \alpha\cdot\tilde\nabla) u = v
\end{equation}
has a unique solution $u\in H^s_\delta(\RR^3)$ satisfying
\begin{equation}
	\|u + n^{-1/2}G_\zeta(n^{1/2}v)\|_{H^s_\delta} \leq \frac{C}{|\zeta|^2},
\end{equation}
for some positive constant $C$ independent of $\zeta$.
\end{lemma}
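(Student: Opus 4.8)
The difficulty is that the seemingly lower-order perturbation $\alpha\cdot\tilde{\nabla}$ is not actually of lower order: since $\tilde{\nabla} = \nabla + i\zeta$, the term $\alpha\cdot\tilde{\nabla}u$ contains the contribution $i(\alpha\cdot\zeta)u$, which grows like $|\zeta|$. Hence the naive approach—apply $G_\zeta$ and sum a Neumann series in $G_\zeta(\alpha\cdot\tilde{\nabla}\,\cdot)$—fails, because the $O(|\zeta|)$ growth of the first-order term exactly cancels the $O(|\zeta|^{-1})$ gain of $G_\zeta$ recorded in \eqref{G_bd}. The plan is to remove this first-order term entirely by a Liouville-type gauge conjugation by $n^{\pm 1/2}$, after which the perturbation becomes a genuine $\zeta$-independent zeroth-order multiplication operator and a Neumann series converges.

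First I would record the algebraic identity underlying the whole argument. Writing $\Delta + 2i\zeta\cdot\nabla + \alpha\cdot\tilde{\nabla} = e^{-i\zeta\cdot x}(\Delta + \alpha\cdot\nabla)e^{i\zeta\cdot x} + k^2$ (using $\zeta\cdot\zeta = k^2$) and conjugating the interior operator $\Delta + \alpha\cdot\nabla = n^{-1}\nabla\cdot(n\nabla)$ by $n^{1/2}$, a direct computation gives
\[
 n^{1/2}\,(\Delta + 2i\zeta\cdot\nabla + \alpha\cdot\tilde{\nabla})\,n^{-1/2} = \Delta + 2i\zeta\cdot\nabla + q, \qquad q := -\tfrac14|\alpha|^2 - \tfrac12\nabla\cdot\alpha,
\]
where the potential $q$ depends only on $n$ and its first two derivatives and is independent of $\zeta$. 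Setting $u = n^{-1/2}w$, equation \eqref{Maxwell6} is therefore equivalent to $(\Delta + 2i\zeta\cdot\nabla + q)\,w = n^{1/2}v$.

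Next I would solve this reduced equation by a fixed-point argument. Since $n$ is constant outside $\Omega$, both $\alpha$ and $q$ are compactly supported, so multiplication by $q$ maps $H^s_\delta$ boundedly into $H^s_{\delta+1}$ (compact support supplies the extra weight decay). Applying $G_\zeta$ and using that $G_\zeta = -(\Delta + 2i\zeta\cdot\nabla)^{-1}$ (evident from its Fourier symbol) turns the equation into $(I - G_\zeta q)\,w = -G_\zeta(n^{1/2}v)$. By \eqref{G_bd} we have $\|G_\zeta q\| \le C/|\zeta| < 1$ for $|\zeta|$ large, so $I - G_\zeta q$ is invertible on $H^s_\delta$ via a Neumann series, giving existence and uniqueness of $w \in H^s_\delta$ and hence of $u = n^{-1/2}w$. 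Expanding, $w = -G_\zeta(n^{1/2}v) - G_\zeta q\,G_\zeta(n^{1/2}v) - \cdots$, so $w + G_\zeta(n^{1/2}v) = O(\|G_\zeta\|^2) = O(|\zeta|^{-2})$ in $H^s_\delta$; multiplying by the bounded factor $n^{-1/2}$ yields the claimed estimate $\|u + n^{-1/2}G_\zeta(n^{1/2}v)\|_{H^s_\delta} \le C/|\zeta|^2$.

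The main obstacle is the first step: recognizing that $\alpha\cdot\tilde{\nabla}$ is not lower order and finding the conjugation that eliminates it; once the gauge transformation is in place, the remainder is a standard perturbation argument built on the weighted resolvent bound \eqref{G_bd}. The remaining technical points to verify are (i) that $q$ is genuinely a bounded multiplier on the relevant $H^s$ scale, which constrains $s$ in terms of the regularity $d$ of $n$, and (ii) the weight bookkeeping ensuring $q\colon H^s_\delta \to H^s_{\delta+1}$, where the compact support of $q$ is used.
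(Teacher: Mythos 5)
Your proposal is correct and follows essentially the same route as the paper: the substitution $w=n^{1/2}u$ (your conjugation $n^{1/2}(\cdot)n^{-1/2}$) is exactly the paper's identity \eqref{lemma_transf}, your potential $q=-\tfrac14|\alpha|^2-\tfrac12\nabla\cdot\alpha$ equals $-\Delta n^{1/2}/n^{1/2}$, i.e.\ the paper's $q$ up to the compensating sign in the reduced equation, and the Neumann-series inversion of $I+G_\zeta(q\,\cdot)$ with the $O(|\zeta|^{-2})$ remainder is identical. Your additional remarks on the weight bookkeeping ($q\colon H^s_\delta\to H^s_{\delta+1}$ via compact support) make explicit a point the paper leaves implicit, but do not change the argument.
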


The Lemma 3.1 in \cite{Colton1992} proves for the case when $s=0$. We study any $s>0$ here.
\begin{proof}
From the identity
\begin{equation}
\label{lemma_transf}
	n^{-1/2}(\Delta + 2i\zeta\cdot\nabla)(n^{1/2}u) = (\Delta + 2i\zeta\cdot\nabla + \alpha\cdot\tilde\nabla) u + qu,
\end{equation}
where $q:= \Delta n^{1/2}/n^{1/2}$, we can rewrite \eqref{Maxwell6} as
\begin{equation}
	(\Delta + 2i\zeta\cdot\nabla - q)f = g,
\end{equation}
where $f:=n^{1/2}u$ and $g:=n^{1/2}v$. Applying the integral operator $G_\zeta$ gives
\begin{equation}
\label{lemma_f}
	f + G_\zeta(qf) = -G_\zeta(g),
\end{equation}
which admits a unique solution in $H^s_\delta(\RR^3)$ since $I+G_\zeta(q\cdot)$ is invertible for $|\zeta|$ sufficiently large. Eq. \eqref{G_bd} also gives
\begin{equation}
\|f + G_\zeta(g)\|_{H^s_\delta} = \|G_\zeta(q(G_\zeta(qf) +G_\zeta(g)))\|_{H^s_{\delta}} \leq \frac{C}{|\zeta|^2},
\end{equation}
for some positive constant $C$ independent of $\zeta$. This proves the lemma.
\end{proof}

\begin{prop}
\label{thm:CGO}
For $|\zeta|$ sufficiently large, there is a unique solution $R_\zeta\in H^s_\delta(\RR^3)$ to \eqref{Maxwell5}. Thus, the CGO solution $E$ defined by \eqref{CGOs} satisfies \eqref{Maxwell_E1} and \eqref{Maxwell_E2}. Moreover, $R_\zeta$ satisfies 
\begin{equation}
\label{Maxwell9}
	\|R - in^{-1/2}G_\zeta(n^{1/2}\alpha\cdot\eta)\zeta\|_{H^s_\delta} = O\left(\frac{1}{|\zeta|}\right).
\end{equation}
\end{prop}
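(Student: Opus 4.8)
The plan is to turn \eqref{Maxwell5} into a fixed-point equation, solve it by a contraction argument valid for $|\zeta|$ large, and then read the leading asymptotics off the dominant source term. Since $G_\zeta$, $\tilde\nabla$ and $\Delta+2i\zeta\cdot\nabla$ are Fourier multipliers that commute with $(I-\Delta)^{s/2}$, and since Lemma \ref{thm:lemma1} already supplies the conjugated solution operator directly in $H^s_\delta$, every estimate below transfers automatically from the case $s=0$ treated in \cite{Colton1992} to $s>0$; I would therefore concentrate on the structural estimates and regard the passage $s=0\to s>0$ as routine.

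First I would isolate the dominant part of the source. As $\eta$ and $\zeta$ are constant, the only $O(|\zeta|)$ term on the right of \eqref{Maxwell5} is $-\tilde\nabla(\alpha\cdot\eta)=-i\zeta(\alpha\cdot\eta)-\nabla(\alpha\cdot\eta)$, with leading part $-i\zeta(\alpha\cdot\eta)$ parallel to $\zeta$. This longitudinal structure motivates the ansatz $R_\zeta=\tilde\nabla\psi+S$, with $\psi$ a scalar potential carrying the leading behaviour and $S$ a lower-order remainder. Substituting and cancelling the exact-gradient part forces $\psi$ to satisfy
\[
(\Delta+2i\zeta\cdot\nabla+\alpha\cdot\tilde\nabla)\psi=-\alpha\cdot\eta-\alpha\cdot S,
\]
which is exactly the equation solved by Lemma \ref{thm:lemma1}, while $S$ solves $(\Delta+2i\zeta\cdot\nabla)S=k^2(1-n)(\eta+\tilde\nabla\psi+S)$, a problem with a zeroth-order source to which $G_\zeta$ applies with the gain \eqref{G_bd}. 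Reading $(\psi,S)$ as the unknown of this coupled system, I would show the induced map is a contraction on the relevant product of weighted Sobolev spaces once $|\zeta|$ is large: Lemma \ref{thm:lemma1} contributes $O(|\zeta|^{-1})$ through the $\alpha\cdot S$ coupling and $G_\zeta$ contributes $O(|\zeta|^{-1})$ in the $S$-equation, using that $\alpha$ and $1-n$ decay so that multiplication raises the weight from $\delta$ to $\delta+1$. This gives existence of $R_\zeta\in H^s_\delta$; uniqueness follows by applying the same contraction to the difference of two solutions, which solves the homogeneous version.

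The asymptotics then fall out of Lemma \ref{thm:lemma1}: its estimate yields $\psi=n^{-1/2}G_\zeta(n^{1/2}\alpha\cdot\eta)+O(|\zeta|^{-2})$, the $\alpha\cdot S$ term being an $O(|\zeta|^{-1})$ perturbation of the source. Since $\tilde\nabla\psi=i\zeta\psi+\nabla\psi$ and $\zeta$ is constant, the leading contribution to $R_\zeta$ is $i\zeta\psi=i\,n^{-1/2}G_\zeta(n^{1/2}\alpha\cdot\eta)\zeta$, precisely the quantity subtracted in \eqref{Maxwell9}; collecting $\nabla\psi$, $S$ and the $O(|\zeta|^{-2})$ error and bounding them in $H^s_\delta$ gives the stated $O(|\zeta|^{-1})$ remainder. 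To finish, I would check that $E$ from \eqref{CGOs} solves both \eqref{Maxwell_E1} and \eqref{Maxwell_E2}. Equation \eqref{Maxwell5} is equivalent to $\Delta E+\nabla(\alpha\cdot E)+k^2nE=0$, so \eqref{Maxwell_E1} holds as soon as \eqref{Maxwell_E2} does. For the divergence condition I would set $\phi:=n^{-1}\nabla\cdot(nE)$; taking the divergence of the equation shows $\phi$ satisfies the scalar Helmholtz equation $\Delta\phi+k^2n\phi=0$. Writing $\phi=e^{i\zeta\cdot x}\rho$ with $\rho=\tilde\nabla\cdot R_\zeta+\alpha\cdot(\eta+R_\zeta)$ decaying, $\rho$ solves $(\Delta+2i\zeta\cdot\nabla)\rho+k^2(n-1)\rho=0$; applying $G_\zeta$ and \eqref{G_bd} shows $I+k^2G_\zeta((n-1)\,\cdot\,)$ is injective for $|\zeta|$ large, so $\rho\equiv0$, which is \eqref{Maxwell4} and hence \eqref{Maxwell_E2}.

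The main obstacle is the first-order term $\tilde\nabla(\alpha\cdot R_\zeta)$: it is of order $|\zeta|$ in the source whereas $G_\zeta$ gains only $|\zeta|^{-1}$, so a naive Neumann series in $G_\zeta$ does not contract. Routing everything through Lemma \ref{thm:lemma1} is what saves the argument, because the conjugation by $n^{1/2}$ absorbs the transport part $\alpha\cdot\tilde\nabla$ of this term exactly, and the longitudinal ansatz arranges that the dangerous part of $R_\zeta$ is parallel to $\zeta$, so that the residual commutator $\tilde\nabla(\alpha\cdot\,)-\alpha\cdot\tilde\nabla$ — whose leading symbol is a transverse, curl-type operator proportional to $\zeta\times\,\cdot$ — annihilates it. Making these cancellations quantitative, i.e. proving that the gradient contribution $\nabla\psi$ and the remainder $S$ are genuinely $O(|\zeta|^{-1})$ in the weighted norm $H^s_\delta$ — where the delicate behaviour of $\nabla G_\zeta$ near the characteristic set $\{|\xi|^2+2\zeta\cdot\xi=0\}$ and at low frequencies must be controlled — is the heart of the proof and the step I expect to require the most care.
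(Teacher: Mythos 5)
Your route is genuinely different from the paper's. The paper never decomposes $R_\zeta$: using $\nabla\times\alpha=0$ (as $\alpha=\nabla\log n$) and the vector identity for $\nabla(A\cdot B)$, it rewrites \eqref{Maxwell5} \emph{identically} as \eqref{Maxwell7}, so that the transport part $\alpha\cdot\tilde\nabla R_\zeta$ sits on the left exactly where Lemma \ref{thm:lemma1} can absorb it; the remaining dangerous first-order term $-\alpha\times(\tilde\nabla\times R_\zeta)$ is tamed by deriving the auxiliary equation \eqref{prop:Q} for $Q:=\tilde\nabla\times R_\zeta$ and inverting $I+k^2G_\zeta((1-n)\,\cdot\,)$, which bounds $\|Q\|_{H^s_\delta}$ uniformly in $\zeta$; existence, uniqueness and \eqref{Maxwell9} then follow from Lemma \ref{thm:lemma1} and a Neumann series in $G_\zeta$. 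Your longitudinal/transverse ansatz $R_\zeta=\tilde\nabla\psi+S$ exploits the same structural cancellation (the leading part of the commutator $\tilde\nabla(\alpha\cdot\,)-\alpha\cdot\tilde\nabla$ is $i\,\alpha\times(\zeta\times\cdot)$, which annihilates the component parallel to $\zeta$), but through a coupled-system contraction rather than an exact rewriting; it is algebraically consistent, and the asymptotics you extract agree with \eqref{Maxwell9}. Moreover, your final verification that a solution of \eqref{Maxwell5} actually satisfies \eqref{Maxwell4}, hence \eqref{Maxwell_E1}--\eqref{Maxwell_E2} --- via the Helmholtz equation for $n^{-1}\nabla\cdot(nE)$ and injectivity of $I+k^2G_\zeta((n-1)\,\cdot\,)$ --- is correct and is a step the paper asserts but never proves; that is a genuine improvement.

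There are, however, two gaps. The serious one is uniqueness, which is part of the statement. Your contraction acts on pairs $(\psi,S)$, so it yields uniqueness of the fixed point of the coupled system, i.e.\ uniqueness among solutions of \eqref{Maxwell5} that admit your decomposition. The difference of two solutions solves the homogeneous version of \eqref{Maxwell5}, but it is not a priori of the form $\tilde\nabla\psi+S$ with $(\psi,S)$ solving your two equations, so ``applying the same contraction to the difference'' is undefined as written. The gap is repairable inside your framework: given a homogeneous solution $R$, set $\psi:=-G_\zeta(\alpha\cdot R)$ and $S:=R-\tilde\nabla\psi$, and check that this pair solves your homogeneous coupled system, hence coincides with its unique (zero) fixed point. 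But that repair, like your existence argument and your claim that $\nabla\psi=O(|\zeta|^{-1})$ in \eqref{Maxwell9}, hinges on the second missing ingredient: a no-gain derivative estimate of the form $\|\nabla G_\zeta v\|_{H^s_\delta}\leq C\|v\|_{H^s_{\delta+1}}$. This estimate is true (it is of Sylvester--Uhlmann type), but it is not among the tools the paper establishes --- \eqref{G_bd} and Lemma \ref{thm:lemma1} carry no derivative information --- and you cannot dodge it by raising the regularity index, since applying Lemma \ref{thm:lemma1} at level $s+1$ demands the source, hence $S$, at level $s+1$, which in turn demands $\tilde\nabla\psi$ at level $s+1$, creating a regress. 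You rightly flag $\nabla G_\zeta$ as the delicate point, but your entire scheme stands or falls with it, and it must actually be proved; the paper's $Q$-equation device is precisely what allows it to avoid estimating $\nabla G_\zeta$ altogether.
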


\begin{proof}
By applying the vector identity
\begin{equation}
	\nabla(A\cdot B) = A\times(\nabla\times B) + B\times(\nabla\times A) + (A\cdot \nabla)B + (B\cdot\nabla)A,
\end{equation}
and rearranging terms, we can rewrite \eqref{Maxwell5} as
\begin{align}
	(\Delta + 2i\zeta\cdot\nabla + \alpha\cdot\tilde{\nabla})R_\zeta = & -\alpha\times(\tilde\nabla\times R_\zeta)  \label{Maxwell7}	- (R_\zeta\cdot\nabla)\alpha \\&- \tilde{\nabla}(\alpha\cdot\eta) + k^2(1-n)(\eta + R_\zeta).\notag
\end{align}
Recall that $\tilde{\nabla} = \nabla + i\zeta$, which is potentially troublesome. We assume that $Q:=\tilde{\nabla}\times R_\zeta$. By \eqref{Maxwell3}, we have that
\begin{equation}
	\tilde{\nabla}\times Q = k^2(n-1)\eta + k^2n R_\zeta
\end{equation}
and hence
\begin{equation}
	\tilde{\nabla}\times\tilde{\nabla}\times Q = k^2\nabla n\times (\eta + R_\zeta) +  k^2(n-1)\zeta\times\eta  + k^2 n Q.
\end{equation}
Since $\tilde{\nabla}\cdot Q = 0$, we now have 
\begin{equation}
\label{prop:Q}
	\Delta Q =  2i\zeta\cdot\nabla q =  k^2\nabla m\times(\eta + R_\zeta) + k^2 (1-n)(i\zeta\times\eta + Q).
\end{equation}
If $R_\zeta\in H^s_\delta$ and $|\zeta|$ is large, \eqref{G_bd} implies that $I + k^2G_\zeta (1-n)$ is invertible and the $\|Q\|_{H^s_\delta}$ is bounded by a constant independent of $\zeta$.

Therefore, by applying Lemma \ref{thm:lemma1} to \eqref{Maxwell7} and substituting the solution $R_\zeta$ in to the right hand side of \eqref{Maxwell7} recursively, $R_\zeta$ is given as a summation of a power series of $G_\zeta$, which converges  due to \eqref{G_bd}. This proves that  \eqref{Maxwell7} has a unique solution in $H^s_\delta(\RR^3)$. Furthermore, by Lemma \ref{thm:lemma1}, we see that
\begin{align}
	\|R + n^{-1/2}G_\zeta(n^{1/2} &[ -\alpha\times(\tilde\nabla\times R_\zeta)  \label{Maxwell8}	- (R_\zeta\cdot\nabla)\alpha \\&- \tilde{\nabla}(\alpha\cdot\eta) + k^2(1-n)(\eta + R_\zeta)])\|_{H^s_\delta} = O\left(\frac{1}{|\zeta|^2}\right).\notag
\end{align}
Substituting $\tilde{\nabla}(\alpha\cdot \eta) = (\nabla + i\zeta)(\alpha\cdot \eta)$, \eqref{Maxwell8} implies that
\begin{align}
  \|R + &in^{-1/2}G_\zeta(n^{1/2} \alpha\cdot\eta)\zeta\|_{H^s_\delta} \leq \|n^{-1/2}G_\zeta(n^{1/2}[ -\alpha\times(\tilde\nabla\times R_\zeta)	\\& - (R_\zeta\cdot\nabla)\alpha - \nabla\alpha\cdot\eta) + k^2(1-n)(\eta + R_\zeta)])\|_{H^s_\delta} + O\left(\frac{1}{|\zeta|^2}\right).\notag \\& = O\left(\frac{1}{|\zeta|}\right)
\end{align}
This complete the proof.

\end{proof}

By applying Sobolev embedding theorem to on a bounded domain, we have the estimate in $H^s(\Omega)$ and $C^d$, when $s = \frac{3}{2}+d+\iota$, $\iota>0$.
\begin{coro}
Let $\Omega$ be an open, bounded domain. With same hypotheses as the previous proposition, we then have
\begin{equation}
	\|R_\zeta - in^{-1/2}G_\zeta(n^{1/2}\alpha\cdot \eta) \zeta\|_{H^s(\Omega)} \leq \frac{C}{|\zeta|},
\end{equation}
for some positive constant $C$ independent of $\zeta$. Moreover, when $s = \frac{3}{2}+d+\iota$, we also have
\begin{equation}
\label{Maxwell19}
	\|R_\zeta - in^{-1/2}G_\zeta(n^{1/2}\alpha\cdot \eta) \zeta\|_{C^d(\Omega)} \leq \frac{C}{|\zeta|},
\end{equation}
for some positive constant $C$.
\end{coro}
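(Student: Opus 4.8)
The plan is to transfer the weighted global estimate \eqref{Maxwell9} furnished by Proposition \ref{thm:CGO} down to the unweighted space $H^s(\Omega)$ on the bounded domain, and then to invoke the Sobolev embedding $H^s(\Omega)\hookrightarrow C^d(\overline{\Omega})$. Write $w_\zeta := R_\zeta - in^{-1/2}G_\zeta(n^{1/2}\alpha\cdot\eta)\zeta$, so that Proposition \ref{thm:CGO} reads $\|w_\zeta\|_{H^s_\delta(\RR^3)} = O(1/|\zeta|)$ with $-1<\delta<0$. The first displayed estimate then amounts to the bound $\|w_\zeta\|_{H^s(\Omega)}\le C\|w_\zeta\|_{H^s_\delta(\RR^3)}$, and the second to a subsequent embedding.

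First I would exploit that $\Omega$ is bounded. Choosing $M$ with $\overline{\Omega}\subset\{|x|\le M\}$ and using $\delta<0$, the weight satisfies $\langle x\rangle^{2\delta}\ge (1+M^2)^{\delta}=:c_0>0$ on $\Omega$, while trivially $\langle x\rangle^{2\delta}\le 1$; hence on $\Omega$ the weight is comparable to a constant. To move from the global weighted norm to the local unweighted one I would fix a cutoff $\chi\in C^\infty_0(\RR^3)$ with $\chi\equiv 1$ on $\overline{\Omega}$ and compact support, so that $\|w_\zeta\|_{H^s(\Omega)}\le\|\chi w_\zeta\|_{H^s(\RR^3)}$, and then bound $\|\chi w_\zeta\|_{H^s(\RR^3)}$ by $C\|w_\zeta\|_{H^s_\delta(\RR^3)}$ using that $\mathrm{supp}\,\chi$ lies in the region where the weight is bounded below.

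The main obstacle is precisely this last reconciliation: the norm of $H^s_\delta$ is built from the \emph{nonlocal} Bessel potential $(I-\Delta)^{s/2}$, so the pointwise comparison $\langle x\rangle^{2\delta}\asymp 1$ on $\mathrm{supp}\,\chi$ does not transfer directly. I would resolve it by observing that $\langle x\rangle^{2\delta}$ is a temperate, slowly varying weight — an $A_2$ Muckenhoupt weight for $\delta\in(-1,0)$ in $\RR^3$ — so weighted Littlewood–Paley and elliptic estimates give the equivalence of the weighted Bessel-potential norm with the weighted derivative norm $\big(\sum_{|\alpha|\le s}\|\langle x\rangle^{\delta}\partial^\alpha\,\cdot\,\|_{L^2}^2\big)^{1/2}$ (its Gagliardo analogue for noninteger $s$). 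The latter is genuinely local, so bounding the weight below on $\mathrm{supp}\,\chi$ yields $\|\chi w_\zeta\|_{H^s(\RR^3)}\le C\|w_\zeta\|_{H^s_\delta(\RR^3)}$, hence the first estimate. An alternative route is a commutator argument on $(I-\Delta)^{s/2}(\chi w_\zeta)=\chi\,(I-\Delta)^{s/2}w_\zeta+[(I-\Delta)^{s/2},\chi]\,w_\zeta$, absorbing the order $s-1$ commutator term by iteration; either way the weighted global bound controls the unweighted local norm.

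Finally, with $\|w_\zeta\|_{H^s(\Omega)}=O(1/|\zeta|)$ in hand, I would apply the Sobolev embedding on the bounded $C^d$ domain $\Omega\subset\RR^3$. Since the ambient dimension is $3$, one has $H^s(\Omega)\hookrightarrow C^d(\overline{\Omega})$ whenever $s>d+\tfrac32$, which is exactly guaranteed by the stated choice $s=\tfrac32+d+\iota$ with $\iota>0$. This gives $\|w_\zeta\|_{C^d(\Omega)}\le C\|w_\zeta\|_{H^s(\Omega)}=O(1/|\zeta|)$, establishing \eqref{Maxwell19} and completing the argument.
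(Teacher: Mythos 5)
Your proof is correct and follows essentially the same route as the paper, which obtains the corollary by restricting the weighted estimate \eqref{Maxwell9} of Proposition \ref{thm:CGO} to the bounded domain (where the weight $\langle x\rangle^{2\delta}$, $\delta\in(-1,0)$, is comparable to a constant) and then applying the Sobolev embedding $H^s(\Omega)\hookrightarrow C^d(\bar\Omega)$ for $s=\frac{3}{2}+d+\iota$ in dimension $3$. Your extra care with the nonlocality of $(I-\Delta)^{s/2}$ (the $A_2$-weight equivalence or the cutoff--commutator argument) rigorously justifies the weighted-to-unweighted transfer that the paper asserts in one line.
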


\begin{prop} \label{thm:CGO2}
Suppose $\zeta\in\CC^3\backslash\RR^3$, $\eta\in\CC^3$, satisfy $\zeta\cdot\zeta = k^2$ and $\zeta\cdot\eta = 0$ such that as $|\zeta|\rightarrow \infty$ the limits $\zeta/|\zeta|$ and $\eta$ exist and,
\begin{equation}
\label{prop_limit}
	|\zeta/|\zeta| - \zeta_0| = O\left(\frac{1}{|\zeta|}\right),\quad |\eta - \eta_0| = O\left(\frac{1}{|\zeta|}\right).
\end{equation}
$R_\zeta\in H^s_\delta(\RR^3)$ is the unique solution to \eqref{Maxwell5} in Proposition \ref{thm:CGO}. Let $s = \frac{3}{2} + d + \iota$. For $|\zeta|$ large,
\begin{equation}
	\|R_\zeta - i|\zeta|n^{-1/2}G_\zeta(n^{1/2}\alpha\cdot \eta_0) \zeta_0\|_{C^d(\Omega)} = O\left(\frac{1}{|\zeta|}\right).
\end{equation}
\end{prop}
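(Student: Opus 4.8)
The plan is to bootstrap from the preceding corollary, whose estimate \eqref{Maxwell19} is phrased in terms of the ``full'' parameters $\zeta$ and $\eta$, and to show that replacing them by their limits $|\zeta|\zeta_0$ and $\eta_0$ costs only $O(1/|\zeta|)$. Writing $m_\eta := n^{1/2}\alpha\cdot\eta$ and $m_0 := n^{1/2}\alpha\cdot\eta_0$, I would begin with the triangle inequality
\[
\|R_\zeta - i|\zeta|n^{-1/2}G_\zeta(m_0)\zeta_0\|_{C^d(\Omega)} \le \|R_\zeta - in^{-1/2}G_\zeta(m_\eta)\zeta\|_{C^d(\Omega)} + \|n^{-1/2}[G_\zeta(m_\eta)\zeta - |\zeta|G_\zeta(m_0)\zeta_0]\|_{C^d(\Omega)}.
\]
The first term on the right is $O(1/|\zeta|)$ by \eqref{Maxwell19}, so the whole statement reduces to controlling the second term, i.e.\ the difference of the two leading-order profiles.

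Since $n\in C^d$ is bounded away from zero, the factor $n^{-1/2}$ is bounded in the Banach algebra $C^d(\Omega)$ and may be pulled out up to a constant, so it suffices to estimate $\|G_\zeta(m_\eta)\zeta - |\zeta|G_\zeta(m_0)\zeta_0\|_{C^d(\Omega)}$. Using the linearity of $G_\zeta$ I would split
\[
G_\zeta(m_\eta)\zeta - |\zeta|G_\zeta(m_0)\zeta_0 = (\zeta - |\zeta|\zeta_0)\,G_\zeta(m_\eta) + |\zeta|\zeta_0\,G_\zeta(m_\eta - m_0).
\]
Both pieces are then handled with the operator bound \eqref{G_bd} combined with the transfer from $H^s_\delta(\RR^3)$ to $C^d(\Omega)$ used in the corollary (equivalence of the weighted and unweighted $H^s$ norms on the bounded set $\Omega$, followed by Sobolev embedding with $s=\frac32+d+\iota$). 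For the first piece, the hypothesis \eqref{prop_limit} gives $|\zeta - |\zeta|\zeta_0| = |\zeta|\,|\zeta/|\zeta| - \zeta_0| = O(1)$, while $\|G_\zeta(m_\eta)\|_{C^d(\Omega)} \le (C/|\zeta|)\|m_\eta\|_{H^s_{\delta+1}} = O(1/|\zeta|)$ because $\eta$ is bounded and $\alpha=\nabla n/n$ is compactly supported; the product is $O(1/|\zeta|)$. For the second piece, $m_\eta - m_0 = n^{1/2}\alpha\cdot(\eta-\eta_0)$ with $\|m_\eta - m_0\|_{H^s_{\delta+1}} = O(|\eta-\eta_0|) = O(1/|\zeta|)$, so $\|G_\zeta(m_\eta-m_0)\|_{C^d(\Omega)} = O(1/|\zeta|^2)$, and multiplying by $|\zeta|\,|\zeta_0|=|\zeta|$ again gives $O(1/|\zeta|)$. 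Adding the two pieces and combining with the first term closes the estimate.

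The genuinely delicate points are bookkeeping rather than conceptual. One must verify that the constants in \eqref{G_bd} and in the Sobolev/weighted-norm transfer are uniform in $\zeta$ (they are, since they depend only on $\delta$, on the fixed lower bound $c$ on $|\zeta|$, and on $\Omega$), and one must use the compact support (or sufficient decay) of $\alpha$ to guarantee that $m_\eta$ and $m_\eta-m_0$ lie in $H^s_{\delta+1}$ with the stated bounds. The crux is tracking the exact power of $|\zeta|$: the first piece survives only because the $O(1)$ factor $\zeta-|\zeta|\zeta_0$ is precisely compensated by the $O(1/|\zeta|)$ smoothing of $G_\zeta$, and the second piece survives because the $O(1/|\zeta|)$ gain from $|\eta-\eta_0|$ upgrades the operator bound to $O(1/|\zeta|^2)$ before it is amplified by the factor $|\zeta|$. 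I expect this scaling balance, not any single estimate, to be the main thing to get right.
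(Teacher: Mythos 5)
Your proposal is correct and follows essentially the same route as the paper: the paper's proof is the single sentence that the result ``follows directly by substituting \eqref{prop_limit} into \eqref{Maxwell19},'' and your argument is precisely that substitution carried out in detail. Your decomposition $(\zeta - |\zeta|\zeta_0)G_\zeta(m_\eta) + |\zeta|\zeta_0 G_\zeta(m_\eta - m_0)$ and the scaling bookkeeping (the $O(1)$ factor absorbed by the $O(1/|\zeta|)$ operator bound \eqref{G_bd}, and the $O(1/|\zeta|)$ gain from $|\eta-\eta_0|$ compensating the factor $|\zeta|$) supply exactly the details the paper leaves implicit.
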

\begin{proof}
The proof follows directly by substituting \eqref{prop_limit} into \eqref{Maxwell19}.
\end{proof}

We choose the specific sets of $\zeta, \eta$ as in \cite{Colton1992}. Precisely, Let $h$ be a small real parameter and choose arbitrary $a\in\RR$. We define $\zeta_1,\zeta_2$ and $\eta_1,\eta_2$ by
\begin{equation}\label{eq:zetaeta}
\begin{array}{rcl}
	\zeta_1 &=& (a/2, i\sqrt{1/h^2 + a^2/4 -k^2}, 1/h),\\[6pt]
	\zeta_2 &=& (a/2, -i\sqrt{1/h^2 + a^2/4 -k^2}, -1/h),\\[6pt]
	\eta_1  &=& \frac{1}{\sqrt{1/h^2 + a^2}}(1/h,0,-a/2),\\[6pt]
	\eta_2  &=& \frac{1}{\sqrt{1/h^2 + a^2}}(1/h,0,a/2),
\end{array}
\end{equation}
and note that
\begin{equation}
\begin{array}{rcl}
	\displaystyle\lim_{c\rightarrow\infty} \eta_j = \eta_0 &:=& (1,0,0), \quad j = 1,2,\\[6pt]
	\displaystyle\lim_{c\rightarrow\infty} \zeta_1/|\zeta_1| &=&\zeta_0 := \frac{1}{\sqrt{2}}(0,i,1),\\[6pt]
	\displaystyle\lim_{c\rightarrow\infty} \zeta_2/|\zeta_2| &=& -\zeta_0,
\end{array}
\end{equation}
and
\begin{equation}
	\zeta_1 + \zeta_2 = (a, 0, 0),\quad \zeta_0\cdot\zeta_0 = 0,\quad \eta_0\cdot\zeta_0 = 0.
\end{equation}
Proposition \ref{thm:CGO2} implies that
\begin{equation}
(\eta_1 + R_{\zeta_1})\cdot(\eta_2 + R_{\zeta_2}) = 1 + o(1)
\end{equation}
in the $C^d$ norm over bounded domain $\Omega$.

\subsection{Construction of vector fields and uniqueness result}\label{se:unique}

Let us now consider the reconstruction of $(L,\sigma)$. Assume $E_j$, $j=1,2$, be two complex solutions to
\begin{equation}
\label{Maxwell_E_j}
	\nabla\times\nabla\times E_j - k^2n E_j = 0\;\mathrm{in}\; \Omega,
\end{equation}
with the tangential boundary conditions
\begin{equation}
\label{Maxwell_E_bd}
	\quad tE_j = G_j\;\mathrm{on}\;\partial\Omega, 
\end{equation}
with $G_j$ well-chosen boundary values and $j =1,2$. We will see that
\begin{equation}
	\label{eq:E1_E2}
	\nabla\times\nabla\times E_1\cdot E_2 - \nabla\times\nabla\times E_2\cdot E_1 = 0.
\end{equation}
Let $D_j = LE_j$, $j=1,2$, be the internal complex-valued measurements. Assume $L\in C^{d+1}(\Omega)$ is non-vanishing. Substituting $E_j = D_j/L$ in \eqref{eq:E1_E2}, we have, after some algebraical calculation,
\begin{equation}
	\label{eq:vector_field}
	\beta\cdot\nabla L + \gamma L = 0,
\end{equation}
where
\begin{eqnarray}
\beta &=& \chi(x) [(\nabla D_1)D_2 - (\nabla D_2)D_1] + [(\nabla\cdot D_1)D_2 - (\nabla\cdot D_2)D_1] \label{eq:beta}\\
	&& - 2[(\nabla D_1)^TD_2 - (\nabla D_2)D_1],  \notag\\
\gamma &=& \chi(x) [\nabla(\nabla\cdot D_1)\cdot D_2 - \nabla(\nabla\cdot D_2)\cdot D_1] + [\nabla^2D_1\cdot D_2 - \nabla^2D_2\cdot D_1]. \label{eq:gamma}
\end{eqnarray}
Here, $\chi(x)$ is a smooth known complex-valued function with $|\chi(x)|$ uniformly bounded from below by a positive constant on $\bar{\Omega}$.

To show the transport equation \eqref{eq:vector_field} has a unique solution, it suffices to prove that the direction of the vector field $\beta$ is close to a constant and thus the integral curves of $\beta$ connects every internal point to two boundary points.

Let $\check E_1, \check E_2$ be two CGOs with parameters $\zeta_1,\zeta_2$ and $\eta_1,\eta_2$ defined in \eqref{eq:zetaeta}, i.e.,
\begin{equation}
	\check E_j = e^{i\zeta_j\cdot x}(\eta_j + R_{\zeta_j}), \quad j= 1,2.
\end{equation}
Let $\check{D}_j = L \check{E}_j$, $j=1,2$, be the corresponding internal data. By choosing $\chi(x) = -e^{-i(\zeta_1+\zeta_2)\cdot x}\frac{h}{4\sqrt{2}}$ and substituting $\check{D}_j$ into \eqref{eq:beta}, we can analyze the asymptotic behavior of the vector field $\check{\beta}$ as $|\zeta_j|\rightarrow\infty$, or equivalently, $h\rightarrow 0$. Indeed, we have
\begin{equation}
	\nabla \check{D}_j = e^{i\zeta_j\cdot x}[(\eta_j + R_{\zeta_j})(\nabla L)^T + i \mu(\eta_j + R_{\zeta_j}) \zeta_j^T + \mu \nabla(\eta_j + R_{\zeta_j})], \quad j=1,2,
\end{equation}
and
\begin{eqnarray}
	&&\chi(x)(\nabla \check D_1)\check D_2 \notag\\
	&=&	-\frac{Lh}{4\sqrt{2}}[(\eta_1 + R_{\zeta_1})(\nabla L)^T + i L(\eta_1 + R_{\zeta_1}) \zeta_1^T + \mu \nabla(\eta_1 + R_{\zeta_1})](\eta_2+R_{\zeta_2})\phantom{space} \notag\\
	&=&	-\frac{Lh}{4\sqrt{2}}[(\eta_1+R_{\zeta_1})[\nabla L^T(\eta_2 + R_{\zeta_2}) + iL\zeta_1^T (\eta_2 + R_{\zeta_2})] \label{eq:beta1} \\
	&& + \mu\nabla(\eta_1 + R_{\zeta_1}) (\eta_2 + R_{\zeta_2})] \notag\\
	&=& -i\frac{L^2h}{4\sqrt{2}}(\eta_1+R_{\zeta_1})[\zeta_1\cdot(\eta_2+R_{\zeta_2})] + O(h). \notag
\end{eqnarray}
Therefore, by Proposition \ref{thm:CGO2}, $\chi(x)(\nabla \check D_1)\check D_2 \rightarrow 0$ in $C^d(\Omega)$ norm as $h\rightarrow 0$ on a bounded domain $\Omega$. Similarly,
\begin{eqnarray}
	&&\chi(x)(\nabla \check D_1)^T\check D_2 \notag\\
	&=&	-\frac{Lh}{4\sqrt{2}}[\nabla L (\eta_1 + R_{\zeta_1})^T(\eta_2 + R_{\zeta_2}) + i L \zeta_1 (\eta_1 + R_{\zeta_1})^T(\eta_2 + R_{\zeta_2}) \notag\\
	&& + L \nabla(\eta_1 + R_{\zeta_1})^T (\eta_2 + R_{\zeta_2})] \label{eq:beta2}\\
	&=& -iL^2\frac{\zeta_1h}{4\sqrt{2}}  (\eta_1+R_{\zeta_1})\cdot (\eta_2+R_{\zeta_2}) + O(h) \notag\\
	&\rightarrow& -\frac{iL\zeta_0}{4} \quad\mathrm{in}\; C^d(\Omega)\;\; \mathrm{as}\;\; h \rightarrow 0.\notag
\end{eqnarray}
More calculation gives that
\begin{eqnarray}
	&& \chi(x)(\nabla\cdot \check D_1)\check D_2 \notag\\
	&=& -\frac{L}{4\sqrt{2}c}[(\nabla\mu\cdot(\eta_1 + R_{\zeta_1}))(\eta_2 + R_{\zeta_2}) + iL(\zeta_1\cdot(\eta_1 + R_{\zeta_1}))(\eta_2 + R_{\zeta_2}) \label{eq:beta3}\\
	&& + L\nabla\cdot(\eta_1 + R_{\zeta_1})(\eta_2 + R_{\zeta_2})]\notag\\
	&\rightarrow& 0 \quad\mathrm{in}\; C^d(\Omega)\;\; \mathrm{as}\;\; h \rightarrow 0.\notag
\end{eqnarray}
By substituting \eqref{eq:beta1}, \eqref{eq:beta2} and \eqref{eq:beta3} into \eqref{eq:beta}, we have 
\begin{equation}
\label{eq:beta4}
	\lim_{h\rightarrow 0}\|\check{\beta} - L^2\zeta_0 \|_{C^d(\Omega)} = 0,
\end{equation}
i.e., the vector fields have approximately constant directions for small $h$ and their integral curves connect every internal point to two boundary points. Thus, the transport equation \eqref{eq:vector_field} admits a unique solution. 

To see the dependence of vector fields on the boundary conditions, we need to introduce a regularity theorem of Maxwell's equations. Let $tE$ be the tangential boundary condition of $E$. Define the Div-spaces as
\begin{equation}
	H^s_{\mathrm{Div}}(\Omega) = \{u\in H^s\Omega^1(\Omega): \mathrm{Div}(tu)\in H^{s-1/2}(\partial\Omega)\},
\end{equation}
\begin{equation}
	TH^s_{\mathrm{Div}}(\partial\Omega) = \{g\in H^s\Omega^1(\partial\Omega):\mathrm{Div}(g)\in H^s(\partial\Omega)\},
\end{equation}
where $H^s\Omega^1(\Omega)$ is a space of vector functions of which each component is in $H^s(\Omega)$. These are Hilbert spaces with norms
\begin{equation}
\|u\|_{H^s_{\mathrm{Div}}(\Omega)} = \|u\|_{H^s(\Omega)} + \|\mathrm{Div}(tu)\|_{H^{s-1/2}(\partial\Omega)},
\end{equation}
\begin{equation}
\|g\|_{TH^s_{\mathrm{Div}}(\partial\Omega)} = \|g\|_{H^s(\partial\Omega)} + \|\mathrm{Div}(g)\|_{ H^s(\partial\Omega)}
\end{equation}
It is clear that $t(H^s_{\mathrm{Div}}(\Omega)) = TH^{s-1/2}_{\mathrm{Div}}(\partial\Omega)$.

\begin{thm}\label{thm:regularity}
Let $\epsilon,\mu \in C^s$, $s>2$, be positive functions. There is a discrete subset $\Sigma\subset\CC$ such that if $\omega$ is outside this set, then one has a unique solution $E\in H^d_{\mathrm{Div}}$ to \eqref{Maxwell_E1} given any tangential boundary condition $G\in TH^{s-1/2}_{\mathrm{Div}}(\partial\Omega)$. The solution satisfies
\begin{equation}
\|E\|_{H^s_{\mathrm{Div}}(\Omega)} \leq C \|G\|_{TH^{s-1/2}_{\mathrm{Div}}(\partial\Omega)}
\end{equation}
with $C$ independent of $G$. 
\end{thm}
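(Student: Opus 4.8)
The plan is to combine a variational existence-and-uniqueness argument, run through the Fredholm alternative, with an elliptic regularity bootstrap that upgrades the weak solution to $H^s_{\mathrm{Div}}(\Omega)$.

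First I would reduce to homogeneous boundary data. Using the surjectivity of the tangential trace, $t(H^s_{\mathrm{Div}}(\Omega)) = TH^{s-1/2}_{\mathrm{Div}}(\partial\Omega)$, I lift $G$ to a field $E_0\in H^s_{\mathrm{Div}}(\Omega)$ with $\|E_0\|_{H^s_{\mathrm{Div}}(\Omega)}\le C\|G\|_{TH^{s-1/2}_{\mathrm{Div}}(\partial\Omega)}$, and write $E=E_0+u$ where $u$ has vanishing tangential trace and solves $\nabla\times\nabla\times u - k^2 n u = F$ with $F:=-(\nabla\times\nabla\times E_0 - k^2 n E_0)$. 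I then pose the weak problem in the space of $H(\mathrm{curl})$ fields with zero tangential trace, using the sesquilinear form $a(u,v)=\int_\Omega (\nabla\times u)\cdot\overline{(\nabla\times v)}-k^2 n\,u\cdot\bar v$.

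The form $a$ is not coercive, since $\nabla\times$ annihilates gradients. The standard remedy is a Helmholtz-type splitting of the field space into gradient fields and their orthogonal complement: on gradients the form is (up to sign) definite, while on the complement one invokes the compact embedding of divergence-free, tangentially-vanishing, curl-bounded fields into $L^2$ (a compactness result of Weck--Weber--Picard type, adapted to the weighted condition $\nabla\cdot(nu)=0$). This makes $a$ coercive modulo a compact perturbation, so the Fredholm alternative applies. Unique solvability holds exactly when $0$ is not an eigenvalue of $\nabla\times\nabla\times\cdot -k^2 n$; the frequencies $\omega$ for which this fails form the discrete set $\Sigma$, namely the spectrum of the associated operator, which has compact resolvent. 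For $\omega\notin\Sigma$ I obtain a unique $u$ with $\|u\|_{H(\mathrm{curl})}\le C\|F\|$, hence a unique $E\in H(\mathrm{curl})$ depending continuously on $G$.

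To promote this to $H^s_{\mathrm{Div}}$, I first take the divergence of the equation; since $k\neq 0$ this yields the Gauss law $\nabla\cdot(nE)=0$, equivalently $\nabla\cdot E=-\alpha\cdot E$ with $\alpha=\nabla n/n$. Substituting $\nabla\times\nabla\times E=-\Delta E+\nabla(\nabla\cdot E)$ converts the equation into the elliptic system $-\Delta E=\nabla(\alpha\cdot E)+k^2 nE$, whose right-hand side is of lower order, while the matching regularity $\epsilon,\mu\in C^s$ gives $n,\alpha\in C^s$. The data are the tangential trace $tE=G$ together with the Gauss constraint supplying the remaining normal-direction information, so I would invoke the standard regularity theory for the $(\mathrm{curl},\mathrm{div})$-system with prescribed tangential trace and bootstrap the regularity of $E$ in half-integer steps from $L^2$ up to $H^s$, tracking the constants to reach the stated estimate.

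The hard part will be the two places where the non-ellipticity of the curl-curl operator must be circumvented: establishing the precise compactness result required to run the Fredholm alternative in the correct weighted space, and verifying that the tangential Dirichlet condition combined with the Gauss constraint constitutes a genuinely elliptic (Lopatinski--Shapiro) boundary problem -- rather than the naive componentwise Dirichlet problem -- so that the regularity bootstrap is legitimate. This latter compatibility of the boundary operators with the interior ellipticity is the step I expect to demand the most care.
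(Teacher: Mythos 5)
The paper itself contains no proof of Theorem \ref{thm:regularity}: the theorem is stated and then immediately invoked (the text jumps straight to applying it to the CGO boundary data), so it is being used as a citation of known Maxwell well-posedness/regularity theory in the Somersalo-type spaces $H^s_{\mathrm{Div}}$, $TH^{s-1/2}_{\mathrm{Div}}$. Your proposal must therefore be judged on its own, and in outline it follows exactly the standard route by which such results are proved in the literature: trace lifting, a weighted Helmholtz splitting so that the non-coercive form becomes coercive-plus-compact (Weber--Picard compactness), the Fredholm alternative, and then elliptic regularity for the div-curl system with tangential boundary data. That is the right strategy, and nothing in it would fail.

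Two substantive caveats. First, the steps you defer at the end are not peripheral "hard parts" -- the weighted compact embedding and the Lopatinski--Shapiro verification for the tangential-trace-plus-Gauss-constraint boundary problem \emph{are} the proof; as written, your text is a correct plan rather than an argument, and a referee would require both to be carried out (or cited precisely). Second, your description of $\Sigma$ as "the spectrum of the associated operator, which has compact resolvent" is not quite right and would not survive scrutiny: by \eqref{Maxwellkn} the coefficient is $k^2 n = \omega^2\mu_0\epsilon + i\omega\mu_0\sigma$, so the dependence on $\omega$ is polynomial but the family is not a resolvent family of a fixed self-adjoint operator (the zeroth-order term is complex and enters through both $\omega$ and $\omega^2$). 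Discreteness of the exceptional set must instead come from the analytic Fredholm theorem applied to the operator family $\omega\mapsto I+K_\omega$, together with the exhibition of at least one $\omega$ (e.g.\ in a coercive regime) at which invertibility holds, so that the exceptional set is not all of $\CC$. A minor further point: $n$ involves $\sigma$ as well as $\epsilon$, so your claim that $\epsilon,\mu\in C^s$ gives $n,\alpha\in C^s$ silently requires smoothness of $\sigma$ (a gap present already in the paper's own hypothesis), and the bootstrap needs $\alpha=\nabla n/n$, which costs one derivative.
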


Note that when the tangential boundary condition is prescribed by CGOs, i.e., $\check G_j = t\check{E}_j$, $j=1,2$. By Theorem \ref{thm:regularity}, $E_j$ is the unique solution to \eqref{Maxwell_E_j} and \eqref{Maxwell_E_bd}. Then the corresponding vector field $\check{\beta}$ defined in \eqref{eq:beta} satisfies \eqref{eq:beta4}, which implies that the direction of $\check\beta$ is close to constant direction and thus its integral curves connect every internal point to two boundary points. Therefore, \eqref{eq:vector_field} admits a unique solution. 

Furthermore, Theorem \ref{thm:regularity} also allows one to relax the boundary condition $\check{G}_j = t\check E_j$ and still to get the uniqueness of the solution to \eqref{eq:vector_field}.

\begin{prop} \label{thm:betafield}
Under the assumption of Theorem \ref{thm:regularity}, when $G_j$ is in a neighborhood of $\check G = t\check{E}$ in $C^{d+3}(\partial\Omega)$, $j=1,2$, the corresponding vector field  $\beta$ defined in \eqref{eq:beta} satisfies
\begin{equation}
\label{eq:beta5}
	\|\beta - L^2\zeta_0 \|_{C^d(\Omega)} = O(h),
\end{equation}
for small $h$.
\end{prop}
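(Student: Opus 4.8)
The plan is to read Proposition \ref{thm:betafield} as a perturbation statement: the asymptotic identity \eqref{eq:beta4} was established only for the exact CGO boundary data $\check G_j = t\check E_j$, and I must show it survives when the data are moved into a small neighbourhood in $C^{d+3}(\partial\Omega)$. The governing principle is that the whole assignment $(G_1,G_2)\mapsto\beta$ is a bounded bilinear map, so it suffices to control $\beta-\check\beta$ by the distance of the boundary data and then add the already-known bound on $\check\beta - L^2\zeta_0$.

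First I would fix the regularity chain. Choosing $s=\tfrac{3}{2}+(d+1)+\iota$, the Sobolev embedding gives $H^s_{\mathrm{Div}}(\Omega)\hookrightarrow C^{d+1}(\Omega)$ and $C^{d+3}(\partial\Omega)\hookrightarrow TH^{s-1/2}_{\mathrm{Div}}(\partial\Omega)$, so Theorem \ref{thm:regularity} produces a bounded linear solution operator $G\mapsto E$ from $C^{d+3}(\partial\Omega)$ into $C^{d+1}(\Omega)$, whose norm depends only on $\Omega,\mu,\epsilon,\omega$ and not on the CGO parameter $h$. Multiplying by the fixed coefficient $L\in C^{d+1}(\Omega)$, the map $G\mapsto D=LE$ is bounded linear into $C^{d+1}(\Omega)$ as well. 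Since $\beta$ in \eqref{eq:beta} is a first-order Leibniz expression in $D_1,D_2$ with smooth coefficients (including the weight $\chi$), it defines a bounded bilinear map $B:C^{d+1}(\Omega)^2\to C^d(\Omega)$; composing, $(G_1,G_2)\mapsto\beta$ is bounded bilinear from $C^{d+3}(\partial\Omega)^2$ into $C^d(\Omega)$.

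Next I would split $\beta-\check\beta = B(D_1-\check D_1,D_2)+B(\check D_1,D_2-\check D_2)$ by bilinearity, so that
\[\|\beta-\check\beta\|_{C^d(\Omega)} \le \|B\|\big(\|D_1-\check D_1\|_{C^{d+1}}\,\|D_2\|_{C^{d+1}} + \|\check D_1\|_{C^{d+1}}\,\|D_2-\check D_2\|_{C^{d+1}}\big).\]
Each difference is bounded by the solution operator applied to $G_j-\check G_j$, hence by $C\|G_j-\check G_j\|_{C^{d+3}(\partial\Omega)}$. The remaining factors $\|\check D_j\|_{C^{d+1}}$, and likewise $\|D_j\|_{C^{d+1}}$ for data in the neighbourhood, are $O(h^{-(d+1)})$: differentiating $\check E_j=e^{i\zeta_j\cdot x}(\eta_j+R_{\zeta_j})$ brings down powers of $\zeta_j$ of size $|\zeta_j|\sim h^{-1}$, while $\eta_j+R_{\zeta_j}=O(1)$ in $C^d$ by Proposition \ref{thm:CGO2}. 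Therefore $\|\beta-\check\beta\|_{C^d(\Omega)}\le C\,h^{-(d+1)}\max_j\|G_j-\check G_j\|_{C^{d+3}(\partial\Omega)}$.

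Finally I would combine with \eqref{eq:beta4}, whose convergence is in fact at rate $O(h)$ once the subleading terms in \eqref{eq:beta1}--\eqref{eq:beta3} are tracked (the error comes from $h\zeta_1-\sqrt2\zeta_0=O(h)$ and $(\eta_1+R_{\zeta_1})\cdot(\eta_2+R_{\zeta_2})-1=O(h)$). By the triangle inequality, choosing the neighbourhood of $\check G$ to have radius $\delta(h)\sim h^{d+2}$ forces $\|\beta-\check\beta\|_{C^d(\Omega)}=O(h)$, and hence $\|\beta-L^2\zeta_0\|_{C^d(\Omega)}=O(h)$, as claimed. The main obstacle is precisely this $h$-bookkeeping: because the reference data $\check G$ and the norms $\|\check D_j\|_{C^{d+1}}$ blow up polynomially as $h\to0$, plain continuity of $(G_1,G_2)\mapsto\beta$ is insufficient, and one must let the admissible neighbourhood shrink like a fixed power of $h$ so that the perturbation of $\beta$ stays within the same $O(h)$ budget as the CGO asymptotics themselves.
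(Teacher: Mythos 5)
Your overall route is the same as the paper's: use Theorem \ref{thm:regularity} plus Sobolev embedding to get the solution-operator bound $\|E_j-\check E_j\|_{C^{d+1}(\Omega)}\le C\|G_j-\check G_j\|_{C^{d+3}(\partial\Omega)}$, hence $\|D_j-\check D_j\|_{C^{d+1}(\Omega)}\le C\varepsilon$, and then exploit the bilinearity of \eqref{eq:beta} in $(D_1,D_2)$ together with the CGO asymptotics \eqref{eq:beta4} and the triangle inequality. You are more explicit than the paper, which compresses the final step into ``we can easily deduce \eqref{eq:beta5}''; and your structural point --- that plain continuity of $(G_1,G_2)\mapsto\beta$ is not enough because the reference data blow up as $h\to 0$, so the admissible neighbourhood must be allowed to shrink with $h$ --- is correct and is exactly what the paper leaves implicit.

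However, your quantitative bookkeeping fails at that very step. You claim $\|\check D_j\|_{C^{d+1}(\Omega)}=O(h^{-(d+1)})$ on the grounds that each derivative of $e^{i\zeta_j\cdot x}$ costs a factor $|\zeta_j|\sim h^{-1}$ while $\eta_j+R_{\zeta_j}=O(1)$. This ignores the modulus of the phase: from \eqref{eq:zetaeta}, $|e^{i\zeta_1\cdot x}|=e^{-x_2\sqrt{1/h^2+a^2/4-k^2}}$ and $|e^{i\zeta_2\cdot x}|=e^{+x_2\sqrt{1/h^2+a^2/4-k^2}}$, so on any open bounded $\Omega$ at least one of $\|\check D_1\|_{C^0(\Omega)}$, $\|\check D_2\|_{C^0(\Omega)}$ is of size $e^{c/h}$: their product is at least of order $e^{(\sup_\Omega x_2-\inf_\Omega x_2)/h}$, whatever the choice of origin. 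The cancellation $\zeta_1+\zeta_2=(a,0,0)$, which is what keeps $\check\beta$ bounded, occurs only in the \emph{product} $\check D_1\check D_2$; each of your cross terms $B(D_1-\check D_1,D_2)$ and $B(\check D_1,D_2-\check D_2)$ contains a lone CGO factor and therefore inherits the full $e^{c/h}$ growth (the perturbations $D_j-\check D_j$ are generic Maxwell solutions with no compensating exponential decay). The correct estimate is of the form $\|\beta-\check\beta\|_{C^d(\Omega)}\le C\,h^{-N}e^{c/h}\max_j\|G_j-\check G_j\|_{C^{d+3}(\partial\Omega)}$ for some finite $N$, so a neighbourhood of radius $\delta(h)\sim h^{d+2}$ does \emph{not} yield $\|\beta-\check\beta\|_{C^d(\Omega)}=O(h)$; you must take $\delta(h)$ exponentially small in $1/h$, e.g.\ $\delta(h)\lesssim h^{N+1}e^{-c/h}$. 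This does not damage the proposition itself, since it only asserts the existence of some neighbourhood, which may depend on $h$; with this one correction your argument closes, and, read charitably, it is also the correction one must silently supply to make the paper's own ``easily deduce'' step rigorous.
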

\begin{proof}
Let $s = \frac{5}{2} + d + \iota$. Notice that $C^{d+3}(\partial\Omega) \subset H^{d+3}(\partial\Omega) \subset H^{\frac{5}{2}+d+\iota}(\partial\Omega)$. By the Sobolev embedding theorem and Theorem \ref{thm:regularity}, we have that
\begin{equation}
\label{eq:regularity1}
  \|E\|_{C^{d+1}(\Omega)} \leq C\|G\|_{C^{d+3}(\partial\Omega)}.
\end{equation}
Let us now define boundary conditions $G_j\in C^{d+3}(\partial\Omega)$, $j=1,2$, such that
\begin{equation}
\label{eq:regularity2}
  \|G_j - t\check E_j\|_{C^{d+3}(\partial\Omega)} \leq \varepsilon,
\end{equation}
for some $\varepsilon > 0$ sufficiently small. let $E_j$ be the solution to \eqref{Maxwell_E1} and \eqref{Maxwell_E2} with $tE_j = G_j$. By \eqref{eq:regularity1}, we thus have
\begin{equation}
\label{eq:regularity3}
  \|E _j- \check E_j\|_{C^{d+1}(\Omega)} \leq C\varepsilon,
\end{equation}
for some positive constant $C$. Define the complex valued internal data $D_j=LE_j$. We deduce that
\begin{equation}
\label{eq:regularity4}
  \|D_j - \check D_j\|_{C^{d+1}(\Omega)} \leq C_0\varepsilon,
\end{equation}
for $C_0>0$.
Define $\beta$ by \eqref{eq:beta}. We can easily deduce \eqref{eq:beta5} from \eqref{eq:beta4} and \eqref{eq:regularity4}. This finishes the proof.
\end{proof}

Recall $\mathcal{M}$ is the parameter space of $(L,\sigma)$ defined in \eqref{eq:para_space} and $h$ is the parameter in \eqref{eq:zetaeta}. We are in the place to prove Theorem \ref{thm:unique}.

\begin{proof}[Proof of Theorem \ref{thm:unique}]
By Proposition \ref{thm:betafield}, we choose the set of illuminations as a neighborhood of $(\check{G}_j) = (t\check{E}_j)$ in $(C(\partial\Omega))^2$. Since the measurements $D=\tilde{D}$, we have that $L$ and $\tilde{L}$ solve the same transport equation \eqref{eq:vector_field} while $L=\tilde{L} = D/G$ on $\partial\Omega$. As $\beta$ satisfies \eqref{eq:beta5}, we deduce that $L=\tilde{L}$ since the integral curves of $\beta$ map any $x\in\Omega$ to the boundary $\partial\Omega$. More precisely, consider the flow $\theta_x(t)$ associated to $\beta$, i.e., the solution to 
\begin{equation}
\label{eq:beta6}
	\dot{\theta}_x(t) = \beta(\theta_x(t)), \quad \theta(0) = x\in\bar{\Omega}.
\end{equation}
By the Picard-Lindel\"of theorem, \eqref{eq:beta6} admits a unique solution since $\beta$ is of class $C^1(\Omega)$. For $x\in\Omega$, let $x_\pm(x)\in\partial\Omega$ and $t_\pm(x)>0$ such that
\begin{equation}
\label{eq:beta7}
	\theta_x(t_\pm(x)) = x_\pm(x) \in\partial\Omega.
\end{equation}
By the method of characteristics, the solution $L$ to the transport equation \eqref{eq:vector_field} is given by
\begin{equation}
\label{eq:beta8}
	L(x) = L_0(x_\pm(x))e^{-\int_0^{t_\pm(x)}\gamma(\theta_x(s))ds}.
\end{equation}
The solution $\tilde{L}$ is given by the same formula since $\theta_x(t) = \tilde{\theta}_x(t)$. This implies $E_j = \check{E}_j = D_j/L$, $j=1,2$. By the choice of illuminations, we have $|E_j|\neq 0$ due to \eqref{eq:regularity3} and $|\check{E}_j|\neq 0$. Therefore, $k^2n$ is uniquely determined by \eqref{Maxwell_E_j} and thus $\sigma = \tilde{\sigma}$.
\end{proof}

\subsection{Vector fields and stability result}\label{se:stab}

Recall that $\theta_x(t)$ is the flow associated with $\beta$. From the equality
\begin{equation}
\label{eq:beta_9}
\theta_x(t) - \tilde{\theta}_x(t) = \int_0^t [\beta(\theta_x(s)) - \tilde{\beta}(\tilde{\theta}_x(s))]ds,
\end{equation}
and using the Lipschitz continuity of $\beta$ and Gronwall's lemma, we deduce the existence of a constant $C$ such that
\begin{equation}
\label{eq:beta10}
	|\theta_x(t) - \tilde{\theta}_x(t)| \leq Ct\|\beta - \tilde{\beta}\|_{C^0(\bar{\Omega})},
\end{equation}
when $\theta_x(t)$ and $\tilde{\theta}_x(t)$ are in $\bar{\Omega}$. The inequality \eqref{eq:beta10} is uniform in $t$ as all characteristics exit $\bar{\Omega}$ in finite time.

To see higher order estimates, we define $W:=D_x\theta_x(t)$, which solves the equation, $\dot{W}=D_x\beta(\theta_x)W$, with $W(0) = I$. Define $\tilde{W}$ similarly. By using Gronwall's lemma again, we deduce that 
\begin{equation}
\label{eq:beta11}
	|W - \tilde{W}| \leq Ct\|D_x\beta - D_x\tilde{\beta}\|_{C^0(\bar{\Omega})}
\end{equation}
when $\theta_x(t)$ and $\tilde{\theta}_x(t)$ are in $\bar{\Omega}$. Since $\beta$ and $\tilde{\beta}$ are of class $C^d(\bar{\Omega})$, then we obtain iteratively that 
\begin{equation}
\label{eq:beta12}
	|D_x^{d-1}\theta_x(t) - D_x^{d-1}\tilde{\theta}_x(t)| \leq Ct\|D_x\beta - D_x\tilde{\beta}\|_{C^{d-1}(\bar{\Omega})},
\end{equation}
when $\theta_x(t)$ and $\tilde{\theta}_x(t)$ are in $\bar{\Omega}$.


Recall that $\Omega_1$ is defined to be the subset of $\Omega$ by removing a neighborhood of each tangent point of $\partial\Omega$ with respect to $\zeta_0$.

\begin{lemma}\label{thm:stab1}
Let $\Omega$ be an open bounded and convex subset in $\RR^3$ with $C^d$ boundary. Let $d\geq 2$ and assume $\beta$ and $\tilde{\beta}$ are $C^d(\bar{\Omega})$ vector fields which satisfy \eqref{eq:beta5}. Restricting to $\Omega_1$, we have that
\begin{equation}
\label{eq:beta13}
	\|x_+ - \tilde{x}_+\|_{C^{d-1}(\Omega_1)} + \|t_+ - \tilde{t}_+\|_{C^{d-1}(\Omega_1)} \leq C \|\beta_+ - \tilde{\beta}_+\|_{C^{d-1}(\Omega_1)},
\end{equation}
where $C$ is a constant depending on $\Omega$.
\end{lemma}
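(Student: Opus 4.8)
The plan is to realize the exit time $t_+(x)$ as the solution of a scalar implicit equation tied to the boundary, and then to propagate the flow stability estimates \eqref{eq:beta10}--\eqref{eq:beta12} through the implicit function theorem. Since $\partial\Omega$ is of class $C^d$, near any exit point I may choose a $C^d$ boundary defining function $\phi$ (for instance the signed distance to $\partial\Omega$) with $\nabla\phi\neq 0$ on $\partial\Omega$, so that the exit condition \eqref{eq:beta7} reads $F(x,t_+(x)):=\phi(\theta_x(t_+(x)))=0$. The first task, which I regard as the crux, is to establish a uniform transversality bound
\[
J(x):=\partial_t F(x,t_+(x))=\nabla\phi(x_+(x))\cdot\beta(x_+(x))\geq c_0>0,\qquad x\in\Omega_1,
\]
with $c_0$ independent of $x$. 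This is exactly where the restriction to $\Omega_1$ enters: removing neighborhoods of the tangent points of $\partial\Omega$ with respect to $\zeta_0$ keeps the exit points $x_+(x)$ in the region of $\partial\Omega$ where the outward normal has a component along $\zeta_0$ bounded away from zero, while \eqref{eq:beta5} forces $\beta$ to be $C^d$-close to the non-vanishing field $L^2\zeta_0$; combining these with convexity, which makes each forward trajectory meet $\partial\Omega$ transversally and exit in finite time, yields the lower bound for $h$ small.

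Once transversality is in hand I would invoke the implicit function theorem for $F$. Because $F$ is $C^d$ jointly in $(x,t)$ (the flow of a $C^d$ field is $C^d$ in the initial point and $\phi\in C^d$) and $\partial_t F=J$ is bounded below, $t_+$ is a $C^d$ function of $x$ on $\Omega_1$, hence so is $x_+(x)=\theta_x(t_+(x))$, and likewise $\tilde t_+,\tilde x_+$ for $\tilde\beta$. The $C^0$ estimate then follows by writing, from $F(x,t_+)=\tilde F(x,\tilde t_+)=0$,
\[
0=\big[\phi(\theta_x(t_+))-\phi(\tilde\theta_x(t_+))\big]+\big[\phi(\tilde\theta_x(t_+))-\phi(\tilde\theta_x(\tilde t_+))\big],
\]
bounding the first bracket by $\|\phi\|_{C^1}\,|\theta_x(t_+)-\tilde\theta_x(t_+)|$ via \eqref{eq:beta10}, and recognizing the second bracket as $\partial_t\tilde F(x,\xi)\,(t_+-\tilde t_+)$ with $\partial_t\tilde F$ bounded below by $c_0/2$. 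This gives $|t_+-\tilde t_+|\lesssim\|\beta-\tilde\beta\|_{C^0}$, and then $|x_+-\tilde x_+|\lesssim\|\beta-\tilde\beta\|_{C^0}$ by splitting $x_+-\tilde x_+=(\theta_x(t_+)-\tilde\theta_x(t_+))+(\tilde\theta_x(t_+)-\tilde\theta_x(\tilde t_+))$ and using \eqref{eq:beta10} together with the time estimate just obtained.

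For the higher-order part I would differentiate $F(x,t_+(x))=0$ in $x$ repeatedly. One differentiation gives $D_x t_+=-J(x)^{-1}\,\nabla\phi(x_+)\cdot W(x)$ with $W=D_x\theta_x(t_+)$, and each further differentiation expresses $D_x^{k}t_+$ as a universal rational function of $J^{-1}$, of the derivatives $D_x^{j}\theta_x(t_+)$ up to order $k$, and of the lower-order derivatives $D_x^{j}t_+$, $j<k$. Forming the analogous expressions for $\tilde t_+$ and subtracting, every difference of factors is controlled by the flow stability estimates \eqref{eq:beta11}--\eqref{eq:beta12} for the derivatives of $\theta$, by the uniform lower bound on $J$ (which makes $J^{-1}-\tilde J^{-1}$ Lipschitz in the data), and by the already-established estimates on the lower-order derivatives of $t_+$; an induction on $k$ up to $k=d-1$ then yields the $C^{d-1}$ bounds for both $t_+$ and $x_+=\theta_x(t_+)$, which is \eqref{eq:beta13}. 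The only genuinely nontrivial step is the uniform transversality bound above; everything after it is a routine, if lengthy, application of the chain rule combined with the Gronwall-type estimates \eqref{eq:beta10}--\eqref{eq:beta12} and the finiteness of exit times on the convex domain $\Omega$.
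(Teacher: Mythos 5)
Your proposal is correct in substance and arrives at \eqref{eq:beta13}, but by a genuinely different route from the paper. The paper's proof is synthetic--geometric: it compares the two trajectories through the points $A=\theta_x(t_+)$, $B=\tilde\theta_x(\tilde t_+)$, $C=\tilde\theta_x(t_+)$, bounds the angle $\angle AxB$ by $C_1\delta+C_2h$ (see \eqref{eq:beta14}), uses the definition of $\Omega_1$ to bound from below the angle $\phi_1$ between the chord $\overline{xA}$ and the tangent plane of $\partial\Omega$ at $A$, and then applies the law of sines \eqref{eq:beta16} to get $|AB|\leq C\delta$, with higher-order estimates obtained by differentiating that trigonometric identity and the $t_+$ estimate by writing $|CB|$ as an integral of $|\tilde\beta|$ along the flow, as in \eqref{eq:beta18}. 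You instead encode the exit condition as the implicit equation $\phi(\theta_x(t_+))=0$ for a $C^d$ defining function $\phi$ and run the implicit function theorem. Both arguments rest on exactly the same two pillars: the Gronwall flow-stability estimates \eqref{eq:beta10}--\eqref{eq:beta12}, and uniform transversality of the exit on $\Omega_1$ --- your bound $J(x)\geq c_0$ is the analytic form of the paper's angle bound $\phi_1\geq\phi_0$, and both come from \eqref{eq:beta5} plus the removal of tangent points. What your route buys: the $C^{d}$ regularity of $t_+$ and $x_+$ comes for free from the implicit function theorem (the paper tacitly asserts that $\angle ABC$ and $\angle ACB$ are $C^d$ in $x$), and the $C^{d-1}$ difference estimates follow from a systematic induction on the differentiated implicit relation rather than from differentiating a sine-rule identity. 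What the paper's route buys: it is more elementary and makes the roles of convexity and of the excision defining $\Omega_1$ geometrically transparent. One step in your write-up deserves more care (the paper has the analogous gap): in the mean-value step you invoke $\partial_t\tilde F\geq c_0/2$ at intermediate times $\xi\in[t_+,\tilde t_+]$, i.e.\ transversality at points $\tilde\theta_x(\xi)$ that are only \emph{near} $\partial\Omega$; to avoid circularity one should first note that $\tilde\theta_x(t_+)$ lies within $C\delta$ of the boundary in the transversal region, so that $s\mapsto\phi(\tilde\theta_x(s))$ increases at rate at least $c_0/2$ there and the flow must exit within time $O(\delta)$, which requires (as does the paper's claim $\phi_1\geq\phi_0$) that exit points of $\Omega_1$ stay uniformly away from the tangent set.
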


This lemma is similar to the lemma 3.8 in \cite{Bal2010} and the lemma 4.1 in \cite{ChenYang2012}, but uses a different proof.

\begin{proof}
For $x\in\Omega_1$,  let $\theta_x(t)$ and $\tilde{\theta}_x(t)$ be two flows associated to vector fields $\beta$ and $\tilde{\beta}$, respectively. Denote $A:=\theta_x(t_+(x))\in\partial\Omega$ and $B:=\tilde{\theta}_x(\tilde{t}_+(x))\in\partial\Omega$. Without loss of generality, we assume $t_+(x)\leq \tilde{t}_+(x)$. We also denote $C:=\tilde{\theta}_x(t_+(x))\in\Omega$. To simplify the writing, let $\delta := \|\beta - \tilde{\beta}\|_{C^d(\Omega_1)}$.

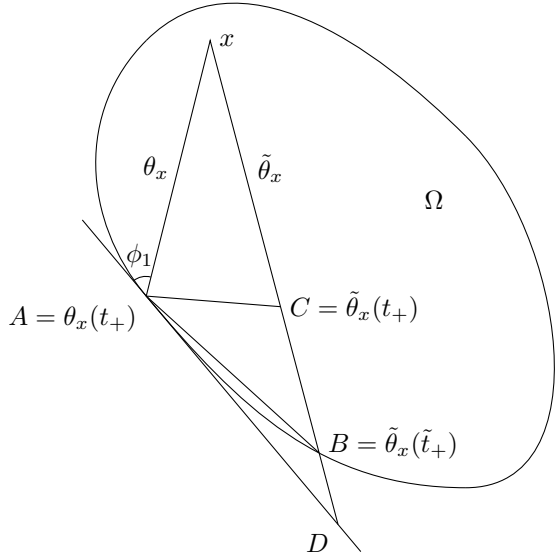
\begin{figure}[ht]
\centering
\begin{tikzpicture}[scale=0.85]
\draw (1, 4)       to [out=-50,in=180]   (6, 1);
\draw (6, 1)       to [out=0, in=-45]    (6, 6.5);
\draw (6, 6.5)     to [out=135, in=45]   (1, 8);
\draw (1, 8)       to [out=-135, in=130] (1, 4); 
\draw (0, 5.192)   -- (4.356, 0);
\draw (3.7, 1.55)  -- (1, 4) --(2, 8) -- (4, 0.425);
\draw (1, 4)       -- (3.1, 3.834);
\draw (0.8, 4.238) arc [radius=0.3, start angle=130, end angle= 75];
\node [right]      at (2,8) {$x$};
\node [left]       at (1.5,6) {$\theta_x$};
\node [right]      at (2.6,6) {$\tilde{\theta}_x$};
\node [below left] at (1,4) {$A=\theta_x(t_+)$};
\node [right]      at (3.7,1.7) {$B=\tilde{\theta}_x(\tilde{t}_+)$};
\node [right]      at (3.1, 3.834) {$C=\tilde{\theta}_x(t_+)$};
\node [below left] at (4,0.425) {$D$};
\node [above]      at (0.9,4.3) {$\phi_1$};
\node              at (5.5, 5.5) {$\Omega$};
\end{tikzpicture}
\caption{Vector fields $\beta$ and $\tilde{\beta}$}
\label{fig:vector_field}
\end{figure}

We first want to show that the angle $\angle AxB$ is controlled by
\begin{equation}
\label{eq:beta14}
\angle AxB \leq C_1\delta + C_2h,
\end{equation}
for some $C_1, C_2$. Indeed, by applying \eqref{eq:beta10} and sine theorem, we can see that $\angle AxC$ is bounded by $C_1\delta$. Also notice that $\tilde\beta$ satisfies \eqref{eq:beta5}. Therefore, similar argument shows that, for any $t_1, t_2$, the angle between the vector from $x$ to $\tilde{\theta}_x(t_1)$ and the vector from $x$ to $\tilde{\theta}_x(t_2)$ is bounded by $C_2h$. Thus $\angle CxB\leq C_2 h$. This proves \eqref{eq:beta14}.

By the definition of $\Omega_1$, a neighborhood of the boundary point at which the tangent plane of $\partial\Omega$ is parallel to $\zeta_0$ is removed. therefore, there exists a constant value $\phi_0>0$ depending only on $\Omega_1$ such that, for any $x\in\Omega_1$, $\phi_1\geq\phi_0$, where $\phi_1$ is the angle between the vector $\overline{xA}$ and the tangle plane of $\partial\Omega$ at $A$, as in Fig. \ref{fig:vector_field}. Then by \eqref{eq:beta14}, when $\delta$ and $h$ are so small that $\phi_0':=\phi_0 - C_1\delta - C_2h > 0$, the extension of $\overline{xB}$ will intersect the tangent plane of $\partial\Omega$ at $A$, with intersection point $D$. Then it is easy to check that
\begin{equation}
\label{eq:beta15}
\angle ABC >\angle ADx = \phi_1 - \angle AxB > \phi_0' > 0.
\end{equation} The sine theorem gives that
\begin{equation}
\label{eq:beta16}
|AB| = \frac{|AC|}{\sin(\angle ABC)}\sin(\angle ACB).
\end{equation}
\eqref{eq:beta10} directly implies $|AB| = |x_+ - \tilde{x}_+|\leq C'\delta$. 

Since $\beta,\tilde{\beta}\in C^d(\Omega)$ and $\partial\Omega$ is of class $C^d$, it is clear that $\angle ABC$ and $\angle ACB$ are $C^d$ functions with respect to $x\in\Omega$. By differentiating \eqref{eq:beta16} and applying \eqref{eq:beta12}, we get higher order estimates
\begin{equation}
\label{eq:beta17}
\|x_+ - \tilde{x}_+\|_{C^{d-1}(\Omega_1)} \leq C''\delta.
\end{equation}

To see the second part in \eqref{eq:beta13}, we have that
\begin{equation}
\label{eq:beta18}
|CB| = \int_{t_+(x)}^{\tilde{t}_+(x)} \tilde{\beta}(\tilde{\theta}_x(d))ds = |\tilde{\beta}(\tilde{\theta}_x(\tau))|(\tilde{t}_+(x) - t_+(x)),
\end{equation}
for $t_+(x) \leq \tau \leq \tilde{t}_+(x)$. Similar argument shows the  estimate of $t_+-\tilde{t}_+$ in \eqref{eq:beta13}.
\end{proof}

\begin{prop}\label{thm:stab2}
Let $d\geq 1$. Let $L$ and $\tilde{L}$ be solutions to \eqref{eq:vector_field} corresponding to coefficients $(\beta,\gamma)$ and $(\tilde{\beta}, \tilde{\gamma})$, respectively, where \eqref{eq:beta5} holds for both $\beta$ and $\tilde{\beta}$.

Let $L_0 = L|_{\partial\Omega}$ and $\tilde{L}_0 = \tilde{L}_{\partial\Omega}$, thus $L_0,\tilde{L}_0 \in C^d(\partial\Omega)$. We also assume $h$ is sufficiently small and $\Omega$ is convex. Then there is a constant $C$ such that restricting to $\Omega_1$
\begin{align}
\label{eq:stable1}
\|L-\tilde{L}\|_{C^{d-1}(\Omega_1)} \leq & C\|L_0\|_{C^d(\partial\Omega_1)}[\|\beta-\tilde{\beta}\|_{C^{d-1}(\Omega_1)} \\ 
& + \|\gamma-\tilde{\gamma}\|_{C^{d-1}(\Omega_1)}] + C\|L_0-\tilde{L}_0\|_{C^d(\partial\Omega_1)} \notag
\end{align}
\end{prop}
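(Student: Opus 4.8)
The plan is to read off both solutions from the characteristic representation \eqref{eq:beta8} and to estimate their difference term by term. Writing $I(x) := \int_0^{t_+(x)} \gamma(\theta_x(s))\,ds$ and $\tilde I(x) := \int_0^{\tilde t_+(x)} \tilde\gamma(\tilde\theta_x(s))\,ds$, the two solutions are $L(x) = L_0(x_+(x))\,e^{-I(x)}$ and $\tilde L(x) = \tilde L_0(\tilde x_+(x))\,e^{-\tilde I(x)}$. The first step is the algebraic decomposition
\[
L - \tilde L = \left[L_0(x_+) - L_0(\tilde x_+)\right] e^{-I} + \left[L_0(\tilde x_+) - \tilde L_0(\tilde x_+)\right] e^{-I} + \tilde L_0(\tilde x_+)\left[e^{-I} - e^{-\tilde I}\right],
\]
which isolates the three sources of error: the discrepancy in the exit point, the discrepancy in the boundary data, and the discrepancy in the damping exponent. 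Throughout, convexity of $\Omega$ together with the near-constant direction \eqref{eq:beta5} guarantees that the exit times $t_+$ and $\tilde t_+$ are uniformly bounded on $\Omega_1$, so that the factors $e^{-I}$, $e^{-\tilde I}$ and all their derivatives up to order $d-1$ are bounded uniformly in $h$ (using $\gamma,\tilde\gamma\in C^{d-1}$ inherited from $D\in C^{d+1}$).

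Next I would estimate each of the three terms in the $C^{d-1}(\Omega_1)$ norm. For the first term, the composition/mean-value estimate bounds it by $C\|L_0\|_{C^d(\partial\Omega_1)}\,\|x_+ - \tilde x_+\|_{C^{d-1}(\Omega_1)}$, and Lemma \ref{thm:stab1} converts this into the $\|\beta - \tilde\beta\|_{C^{d-1}(\Omega_1)}$ contribution. The second term is directly bounded by $C\|L_0 - \tilde L_0\|_{C^d(\partial\Omega_1)}$ after a composition estimate in the argument $\tilde x_+$. For the third term I would use
\[
e^{-I} - e^{-\tilde I} = -(I - \tilde I)\int_0^1 e^{-(sI + (1-s)\tilde I)}\,ds,
\]
so that in $C^{d-1}$ it is controlled by $C\,\|I - \tilde I\|_{C^{d-1}(\Omega_1)}$ times a bounded factor; the prefactor $\tilde L_0(\tilde x_+)$ contributes $\|L_0\|_{C^d(\partial\Omega_1)}$ after bounding $\|\tilde L_0\|_{C^d} \le \|L_0\|_{C^d} + \|L_0 - \tilde L_0\|_{C^d}$ and absorbing the lower-order piece.

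The crux is therefore the estimate
\[
\|I - \tilde I\|_{C^{d-1}(\Omega_1)} \le C\left(\|\gamma - \tilde\gamma\|_{C^{d-1}(\Omega_1)} + \|t_+ - \tilde t_+\|_{C^{d-1}(\Omega_1)} + \|\beta - \tilde\beta\|_{C^{d-1}(\Omega_1)}\right).
\]
I would split $I - \tilde I$ into a limit-of-integration piece coming from $t_+ - \tilde t_+$, a coefficient piece coming from $\gamma - \tilde\gamma$ evaluated along the same flow, and a flow piece coming from $\gamma$ evaluated along $\theta_x$ versus $\tilde\theta_x$. The main obstacle is the $C^{d-1}$ control of the last piece: differentiating $x \mapsto \int_0^{t_+(x)} \gamma(\theta_x(s))\,ds$ up to order $d-1$ produces, via the chain and Fa\`a di Bruno rules, terms involving $D_x^j\theta_x$ and $D_x^j t_+$ for $j\le d-1$, which are precisely the quantities controlled by \eqref{eq:beta12} and by Lemma \ref{thm:stab1}. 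Once these are inserted, every factor is bounded and every difference is of the claimed type; using Gronwall-type flow bounds to pass from $\|\theta_x - \tilde\theta_x\|_{C^{d-1}}$ to $\|\beta - \tilde\beta\|_{C^{d-1}(\Omega_1)}$ and invoking Lemma \ref{thm:stab1} once more for $\|t_+ - \tilde t_+\|_{C^{d-1}}$, the three contributions assemble into exactly \eqref{eq:stable1}.
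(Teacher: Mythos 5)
Your proposal is correct and follows essentially the same route as the paper: both start from the characteristic formula \eqref{eq:beta8}, decompose $L-\tilde L$ into exit-point, boundary-data, and exponent discrepancies, and reduce the exponent term to the three pieces (coefficient difference $\gamma-\tilde\gamma$ along the flow, flow difference controlled via Gronwall/\eqref{eq:beta12}, and the limit-of-integration tail controlled by $\|t_+-\tilde t_+\|_{C^{d-1}}$), invoking Lemma \ref{thm:stab1} for the exit-point and exit-time stability. The only differences are cosmetic: you split the paper's first term into two and write $e^{-I}-e^{-\tilde I}$ via an explicit integral representation where the paper simply invokes the Leibniz rule.
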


The proposition and the proof follows proposition 4.2 in \cite{ChenYang2012}.
\begin{proof}

By the method of characteristics, $L(x)$ is determined explicitly in \eqref{eq:beta8}, while $\tilde{L}$ has a similar expression. We thus have
\begin{align}\label{eq:stable2}
|L(x) - \tilde{L}(x)| \leq & |(L_0(x_+(x)) - \tilde{L}_0(\tilde{x}_+(x)))e^{-\int_0^{t_+(x)}\gamma(\theta_x(s))ds}| \\ 
& + |\tilde{L}_0(\tilde{x}_+(x))(e^{-\int_0^{t_+(x)}\gamma(\theta_x(s))ds} - e^{-\int_0^{\tilde{t}_+(x)}\tilde{\gamma}(\tilde{\theta}_x(s))ds})|.\notag
\end{align}
Applying Lemma \ref{thm:stab1}, we deduce that
\begin{equation}\label{eq:stable3}
|D_x^{d-1}[L_0(x_+(x)) - \tilde{L}_0(\tilde{x}_+(x))]| \leq \|L_0-\tilde{L}_0\|_{C^{d-1}(\partial\Omega)} + C \|L_0\|_{C^{d-1}(\partial\Omega)}\|\beta-\tilde{\beta}\|_{C^{d-1}(\Omega_1)}.
\end{equation}
This proves $L_0(x_+(x))$ is stable. To consider the second term, by the Leibniz rule, it is sufficient to prove the stability result for $\int_0^{t_+(x)}\gamma(\theta_x(s))ds$.

Assume without loss of generality that $t_+(x) < \tilde{t}_+(x)$. Then by applying \eqref{eq:beta12}, we have
\begin{align}
\int_0^{t_+(x)} [\gamma(\theta_x(s)) - \tilde{\gamma}(\tilde{\theta}_x(s))]ds &= \int_0^{t_+(x)} [\gamma(\theta_x(s)) - \gamma(\tilde{\theta}_x(s)) + (\gamma - \tilde{\gamma})\tilde{\theta}_x(s)]ds \notag\\
& \leq C\|\gamma\|_{C^0(\Omega_1)}\|\beta - \tilde{\beta}\|_{C^0(\Omega_1)} + C\|\gamma - \tilde{\gamma}\|_{C^0(\Omega_1)}. \notag
\end{align}
Derivatives of order $d-1$ of the above expression are uniformly bounded since $t_+(x)\in C^{d-1}(\Omega_1)$, $\gamma$ has $C^d$ derivatives bounded on $\Omega$ and $\theta_x(t)$ is stable as in \eqref{eq:beta12}.

It remains to handle the term
\begin{equation*}
v(x) := \int_{t_+(x)}^{\tilde{t}_+(x)} \tilde{\gamma}(\tilde{\theta}_x(s))ds.
\end{equation*}
$\tilde{\beta}$ and $\tilde{\gamma}$ are of class $C^d(\Omega_1)$, so is the function $x\rightarrow\tilde{\gamma}(\tilde{\theta}_x(s))$. Derivatives of order $d-1$ of $v(x)$ involve terms of size $t_+(x) - \tilde{t}_+(x)$ and terms of form
\begin{equation*}
D_x^m\bigg(\tilde{t}_+ D_x^{d-1-m}\tilde{\gamma}(\tilde{\theta}_x(\tilde{t}_+)) - t_+ D_x^{d-1-m}\tilde{\gamma}(\tilde{\theta}_x(t_+))\bigg), \quad 0\leq m\leq d-1.
\end{equation*}
Since the function has $d-1$ derivatives that are Lipschitz continuous, we thus have
\begin{equation*}
|D_x^{d-1}v(x)| \leq C \|t_+ - \tilde{t}_+\|_{C^{d-1}(\Omega_1)}.
\end{equation*}
The rest of the proof follows Lemma \ref{thm:stab1}.
\end{proof}

Now we can prove the main stability theorem.

\begin{proof}[Proof of Theorem \ref{thm:stab}]
The first part follows directly from \eqref{eq:beta} and Proposition \ref{thm:stab2}. This also provides a stability result for $E_j = D_j/L$ as $L$ is non-vanishing. By choosing the boundary illuminations close to the boundary conditions of CGO solutions, \eqref{eq:regularity2} and \eqref{eq:regularity3} imply that $E_j$ is non-vanishing since the CGO solutions are non-vanishing. Thus \eqref{Maxwell_E1} gives the stability control of $k^2n$ and thus $\sigma$.
\end{proof}

\subsection{Stability with 6 complex internal data}\label{se:stab2}
Rather than applying the characteristics method to \eqref{eq:vector_field}, we can rewrite \eqref{eq:vector_field} into matrix form by introducing more internal measurements. We first construct proper CGO solutions. Let $j=1,2,3$ in this section. We can choose unit vectors $\zeta_0^j$ and $\eta_0^j$, such that $\zeta_0^j\cdot \zeta_0^j = 0$, $\zeta_0^j\cdot\eta_0^j=0$ and $\{\zeta_0^j\}$ are linearly independent. Also, choose $(\zeta_1^j, \zeta_2^j)$ and $(\eta_1^j, \eta_2^j)$ such that $|\zeta|:=|\zeta_1^j|=|\zeta_2^j|$,
\begin{align}\label{eq:stable4}
\lim_{|\zeta|\rightarrow\infty} \frac{\zeta_1^j}{|\zeta|} = \lim_{|\zeta|\rightarrow\infty} \frac{\zeta_2^j}{|\zeta|} = \zeta_0^j, \quad\mathrm{and}\quad \lim_{|\zeta|\rightarrow\infty} \eta^j = \eta_0^j.
\end{align}

We construct CGO solutions $\check{E}_1^j, \check{E}_2^j$ corresponding to $(\zeta_1^j,\eta_1^j)$ and $(\zeta_2^j, \eta_2^j)$. Let the boundary illuminations $G_1^j, G_2^j$ be chosen according to \eqref{eq:regularity2} for $\varepsilon$ small enough. The measured internal data are then given by $D_1^j, D_2^j$. Proposition \ref{thm:regularity} shows that the vector field defined by \eqref{eq:beta} satisfies that

\begin{equation}\label{eq:stable5}
	\|\beta^j - L^2\zeta_0^j\|_{C^d(\Omega)} \leq \frac{C}{|\zeta|}.
\end{equation}
While $|\zeta|$ is sufficiently large and $L\neq 0 $ on $\bar{\Omega}$, we obtain that the vector $\{\beta^j(x)\}$ are linear independent at every $x\in\Omega$. Thus matrix $(\beta^j(x))$ is invertible with inverse of class $C^d(\Omega)$. By constructing vector-valued function  $\Gamma(x)\in (C^d(\bar\Omega))^3$, the transport equation \eqref{eq:vector_field} now becomes the matrix equation
\begin{equation}\label{eq:stable6}
\nabla L + \Gamma(x) L = 0.
\end{equation}
Notice that $\Gamma(x)$ is stable under small perturbations in the data $D := (D^j_1, D_2^j)\in (C^{d}(\bar\Omega))^6$, i.e.,
\begin{equation}\label{eq:stable7}
\|\Gamma - \tilde{\Gamma}\|_{(C^{d-1}(\Omega))^3} \leq C \|D - \tilde{D}\|_{(C^{d}(\Omega))^6}.
\end{equation}

Assume $\Omega$ is connected and $L_0 = L|_{\partial\Omega}$ is known. Choose a smooth curve from $x\in\Omega$ to a point on the boundary. Restricting to the curve, \eqref{eq:stable6} is a stable ordinary differential equation. Keep the curve fixed. Let $L$ and $\tilde{L}$ be solutions to \eqref{eq:stable6} with respect to $\Gamma$ and $\tilde{\Gamma}$, respectively. By solving the equation explicitly and \eqref{eq:stable7}, we find that 
\begin{equation}
\label{eq:stable8}
\|L - \tilde{L}\|_{C^{d-1}(\Omega)} \leq C \|D - \tilde{D}\|_{(C^{d}(\Omega))^6}.
\end{equation}

\begin{proof}[Proof of Theorem \ref{thm:stab2n}]
The first result in \eqref{eq:stable9} is directly from \eqref{eq:stable8}. The proof of the stability of $\sigma$ is exactly the same as in the proof of theorem \ref{thm:stab}.
\end{proof}

\section{Acknowledgment}\label{se:acknowledge}
The authors thank Professor Gunther Uhlmann for suggesting this problem and for helpful discussions. The authors also thank Professor Maarten de Hoop for helpful discussions on the inverse problem models. The work of both authors was partly supported by NSF.

\end{document}